\newtheorem{theorem}{Theorem}
\newtheorem{corollary}[theorem]{Corollary}
\newtheorem{proposition}[theorem]{Proposition}
\newenvironment{proof}[1][Proof]{\textbf{#1.} }{\ \rule{0.5em}{0.5em}}
\begin{document}

\title{Curvature in Synthetic Differential Geometry of Groupoids}
\author{Hirokazu Nishimura\\Institute of Mathematics, University of Tsukuba\\Tsukuba, Ibaraki, 305-8571, Japan}
\maketitle

\begin{abstract}
We study the fundamental properties of curvature in groupoids within the
framework of synthetic differential geometry. As is usual in synthetic
differential geometry, its combinatorial nature is emphasized. In particular,
the classical Bianchi identity is deduced from its combinatorial one.

\end{abstract}

\section{Introduction}

The notion of curvature, which is one of the most fundamental concepts in
differential geometry, retrieves its combinatorial or geometric meaning in
synthetic differential geomety. It was Kock \cite{k4} who studied it up to the
second Bianchi identity synthetically for the first time. In particular, he
has revealed the combinatorial nature of the second Bianchi identity by
deducing it from an abstract one.

In \cite{k4} Kock trotted out first neighborhood relations, which are indeed
to be seen in formal manifolds, but which are no longer expected to be seen in
microlinear spaces in general. Since we believe that microlinear spaces should
play the same role in synthetic differential geometry as smooth manifolds have
done in classical differential geometry, we have elevated his ideas to a
microlinear context in \cite{n1}. However we were not so happy, because our
proof of the second Bianchi identity there appeared unnecessarily involved,
making us feel that we were somewhat off the point, though the proof was
completely correct.

Recently we got accustomed to groupoids, which encouraged us to attack the
same problem once again. Within the framework of groupoids, we find it
pleasant to think multiplicatively rather than additively (cf. Nishimura
\cite{n3}), which helps grasp the nature of the second Bianchi identity
firmly. Now we are to the point. What we have to do in order to deduce the
classical second Bianchi identity from the combinatorial one is only to note
some commutativity on the infinitesimal level, though groupoids are, by and
large, highly noncommutative. Our present experience is merely an example of
the familiar wisdom in mathematics that a good generalization reveals the nature.

\section{Preliminaries}

\subsection{Synthetic Differential Geometry}

Our standard reference on synthetic differential geometry is Chapters 1-5 of
Lavendhomme \cite{l1}. We will work internally within a good topos, in which
the intended set $\mathbb{R}$ of real numbers is endowed with a cornucopia of
nilpotent infinitesimals pursuant to the general Kock-Lawvere axiom. To see
how to build such a good topos, the reader is referred to Kock \cite{k1} or
Moerdijk and Reyes \cite{mr1}. Any space mentioned in this paper will be
assumed to be microlinear, unless stated to the contrary. We denote by $D$ the
set $\{d\in\mathbb{R}\mid d^{2}=0\}$, as is usual in synthetic differential geometry.

Given a group $G$, we denote by $\mathcal{A}G$ the tangent space of $G$ at its
identity, i.e., the totality of mappings $t:D\rightarrow G$ such that $t_{0}$
is the identity of $G$. We will often write $t_{d}$ rather than $t(d)$ for any
$d\in D$. As we will see shortly, $\mathcal{A}G$ is more than an $\mathbb{R}$-module.

\begin{proposition}
\label{t1.1}For any $t\in\mathcal{A}G$ and any $(d_{1},d_{2})\in D(2)$, we
have
\[
t_{d_{1}+d_{2}}=t_{d_{1}}t_{d_{2}}=t_{d_{2}}t_{d_{1}}%
\]
so that $t_{d_{1}}$ and $t_{d_{2}}$ commute.
\end{proposition}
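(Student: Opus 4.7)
The plan is to exhibit all three quantities as values of maps $D(2)\to G$ that have the same restrictions to the two coordinate axes, and then invoke microlinearity of $G$ to conclude they coincide.

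First I would observe that the sum map $+\colon D(2)\to D$ is well defined: if $(d_{1},d_{2})\in D(2)$, then $d_{1}^{2}=d_{2}^{2}=d_{1}d_{2}=0$, so $(d_{1}+d_{2})^{2}=0$. Consequently the assignment
\[
\alpha(d_{1},d_{2})=t_{d_{1}+d_{2}}
\]
defines a map $\alpha\colon D(2)\to G$. I would then define two further maps out of $D(2)$ by setting
\[
\beta(d_{1},d_{2})=t_{d_{1}}t_{d_{2}},\qquad \gamma(d_{1},d_{2})=t_{d_{2}}t_{d_{1}},
\]
using the group multiplication of $G$. The key routine step is verifying that on each axis all three maps collapse to $t$ itself: on $\{0\}\times D$ each of $\alpha,\beta,\gamma$ sends $(0,d)$ to $t_{d}$ (since $t_{0}=e$), and symmetrically on $D\times\{0\}$ each sends $(d,0)$ to $t_{d}$.

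Now I would appeal to microlinearity. The space $D(2)$ is the standard quasi-colimit (pushout) of two copies of $D$ glued at $0$, so for any microlinear space $M$ the restriction map
\[
M^{D(2)}\longrightarrow M^{D}\times_{M} M^{D}
\]
is a bijection. Applying this to the microlinear space $G$, the three maps $\alpha,\beta,\gamma$ are uniquely determined by their common restrictions to the axes, hence $\alpha=\beta=\gamma$. Evaluating at $(d_{1},d_{2})$ yields the desired identity $t_{d_{1}+d_{2}}=t_{d_{1}}t_{d_{2}}=t_{d_{2}}t_{d_{1}}$.

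The only real obstacle is the conceptual one of recognising that the group operation of $G$, even though it is a priori highly noncommutative, is forced to commute on the image of $\mathcal{A}G$ restricted to a two-dimensional infinitesimal disk. Once the three candidate maps out of $D(2)$ are in hand, the computation reduces to a verification on the axes and a single invocation of the microlinearity hypothesis standing over the whole paper; no calculation beyond $t_{0}=e$ is required.
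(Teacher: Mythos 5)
Your argument is correct and is precisely the proof the paper defers to: the paper's own ``proof'' is just a citation of Proposition 3 of \S 3.2 of Lavendhomme, whose argument is exactly your one --- exhibit $t_{d_{1}+d_{2}}$, $t_{d_{1}}t_{d_{2}}$ and $t_{d_{2}}t_{d_{1}}$ as maps on $D(2)$ agreeing on the two axes and invoke the quasi-colimit presentation of $D(2)$ together with microlinearity of $G$. Nothing further is needed.
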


\begin{proof}
By the same token as in Proposition 3 of \S 3.2 of Lavendhomme \cite{l1}.
\end{proof}

As an easy corollary of this proposition, we can see that
\[
t_{-d}=(t_{d})^{-1}%
\]
for we have $(d,-d)\in D(2)$.

\begin{proposition}
\label{t1.2}For any $t_{1},t_{2}\in\mathcal{A}G$, we have
\[
(t_{1}+t_{2})_{d}=(t_{2})_{d}(t_{1})_{d}=(t_{1})_{d}(t_{2})_{d}%
\]
for any $d\in D$, so that $(t_{1})_{d}$ and $(t_{2})_{d}$ commute.
\end{proposition}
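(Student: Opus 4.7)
The plan is to realize the sum $t_{1}+t_{2}$ via the standard combinatorial description of tangent-vector addition on a microlinear space, and then exhibit two natural candidate extensions supplied by the multiplication of $G$. Recall from Lavendhomme \cite{l1}, \S3.2, that when $G$ is microlinear the sum $t_{1}+t_{2}\in\mathcal{A}G$ is characterized as follows: microlinearity yields a unique $\varphi\colon D(2)\to G$ satisfying $\varphi(d,0)=(t_{1})_{d}$ and $\varphi(0,d)=(t_{2})_{d}$, and one then defines $(t_{1}+t_{2})_{d}:=\varphi(d,d)$.

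I would then write down the two obvious candidates coming from the group law, namely
\[
\varphi_{1}(d_{1},d_{2}):=(t_{1})_{d_{1}}(t_{2})_{d_{2}},\qquad \varphi_{2}(d_{1},d_{2}):=(t_{2})_{d_{2}}(t_{1})_{d_{1}}.
\]
Both are defined on the whole of $D\times D$ and hence a fortiori on $D(2)$. Using that $(t_{i})_{0}$ is the identity of $G$, both $\varphi_{j}$ restrict on the first coordinate axis to $(t_{1})_{d}$ and on the second to $(t_{2})_{d}$, so each qualifies as the $\varphi$ above.

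By the uniqueness clause built into the microlinearity of $G$, we must have $\varphi_{1}=\varphi_{2}$ on $D(2)$. Evaluating at $(d,d)\in D(2)$ (which is a legal element because $d^{2}=0$) delivers
\[
(t_{1})_{d}(t_{2})_{d} = (t_{2})_{d}(t_{1})_{d},
\]
and by the very definition of tangent addition this common value is $(t_{1}+t_{2})_{d}$, yielding the asserted string of equalities.

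The hard part is organizational rather than technical: one must be sufficiently comfortable with the combinatorial definition of tangent-vector addition on a microlinear space to recognize that $\varphi_{1}$ and $\varphi_{2}$ are both legitimate candidates for the universal $\varphi$, so that a single appeal to microlinearity settles both commutativity and the identification with $t_{1}+t_{2}$ at once, with no infinitesimal computation required.
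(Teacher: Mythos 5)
Your proof is correct and is essentially the argument the paper invokes: the paper's "proof" is just a pointer to Proposition 6 of \S 3.2 of Lavendhomme, whose argument is exactly yours — realize $t_{1}+t_{2}$ via the unique map on $D(2)$ determined by its restrictions to the axes, observe that both group-multiplication orders give such a map, and conclude by uniqueness. No discrepancy to report.
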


\begin{proof}
By the same token as in Proposition 6 of \S 3.2 of Lavendhomme \cite{l1}.
\end{proof}

As an easy corollary of this proposition, we can see, by way of example, that
$(t_{1})_{d_{1}d_{2}}$ and $(t_{2})_{d_{1}d_{3}}$ commute for any $d_{1}%
,d_{2},d_{3}\in D$, for we have
\[
(t_{1})_{d_{1}d_{2}}(t_{2})_{d_{1}d_{3}}=(d_{2}t_{1})_{d_{1}}(d_{3}%
t_{2})_{d_{1}}=(d_{3}t_{2})_{d_{1}}(d_{2}t_{1})_{d_{1}}=(t_{2})_{d_{1}d_{3}%
}(t_{1})_{d_{1}d_{2}}%
\]

\begin{proposition}
\label{t1.3}For any $t_{1},t_{2}\in\mathcal{A}G$, there exists a unique
$s\in\mathcal{A}G$ such that
\[
s_{d_{1}d_{2}}=(t_{2})_{-d_{2}}(t_{1})_{-d_{1}}(t_{2})_{d_{2}}(t_{1})_{d_{1}}%
\]
for any $d_{1},d_{2}\in D$.
\end{proposition}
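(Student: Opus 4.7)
The plan is to apply the standard microlinearity-based factorization that lies at the heart of the synthetic construction of the Lie bracket. Define
\[
\varphi : D \times D \to G,\qquad \varphi(d_{1},d_{2}) = (t_{2})_{-d_{2}}(t_{1})_{-d_{1}}(t_{2})_{d_{2}}(t_{1})_{d_{1}},
\]
so that the right-hand side of the target identity is precisely $\varphi(d_{1},d_{2})$. The whole content of the proposition is that $\varphi$ descends through the multiplication map $m:D\times D\to D$, $(d_{1},d_{2})\mapsto d_{1}d_{2}$, to a unique element of $\mathcal{A}G$.

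First I would verify the two axis conditions $\varphi(d_{1},0)=e$ and $\varphi(0,d_{2})=e$, where $e$ denotes the identity of $G$. When $d_{2}=0$, both factors $(t_{2})_{-0}$ and $(t_{2})_{0}$ reduce to $e$, leaving $\varphi(d_{1},0)=(t_{1})_{-d_{1}}(t_{1})_{d_{1}}$, which is $e$ by the corollary to Proposition~\ref{t1.1}. The computation when $d_{1}=0$ is symmetric.

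Next I would invoke the quasi-colimit property expressing that, for any microlinear target, a map $D\times D \to G$ which is identically $e$ on the two coordinate axes factors uniquely through $m$; equivalently, such a map is of the form $(d_{1},d_{2})\mapsto g(d_{1}d_{2})$ for a unique $g:D\to G$, and necessarily $g(0)=e$. Since $G$ is microlinear by our blanket assumption, this applies to $\varphi$ and yields a unique $s:D\to G$ with $s_{0}=e$ and $s_{d_{1}d_{2}}=\varphi(d_{1},d_{2})$; the condition $s_{0}=e$ places $s$ in $\mathcal{A}G$ as required, and uniqueness in $\mathcal{A}G$ is inherited from uniqueness of the factorization.

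The only real step here is the appeal to the $D\times D\to D$ factorization lemma; the axis verification is a one-line calculation using $t_{-d}=(t_{d})^{-1}$, and uniqueness is handed to us for free. This is exactly the pattern by which Lavendhomme~\cite{l1} constructs the Lie bracket of two vector fields on a microlinear space, transposed to the multiplicative setting of a group, and no additional work beyond the boundary check is required.
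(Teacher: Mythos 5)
Your proof is correct and is essentially the argument the paper intends: the paper's proof simply defers to pp.~71--72 of Lavendhomme \cite{l1}, which is exactly the axis-vanishing check followed by the unique factorization of a map $D\times D\rightarrow G$ through $(d_{1},d_{2})\mapsto d_{1}d_{2}$ that you spell out. No discrepancy to report.
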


\begin{proof}
By the same token as in pp.71-72 of Lavendhomme \cite{l1}.
\end{proof}

We will write $[t_{1},t_{2}]$ for the above $s$.

\begin{theorem}
\label{t1.4}The $\mathbb{R}$-module $\mathcal{A}G$ endowed with the above Lie
bracket $[\cdot,\cdot]$ is a Lie algebra over $\mathbb{R}$.
\end{theorem}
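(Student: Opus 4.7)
The goal is to verify the three defining properties of a Lie algebra over $\mathbb{R}$: $\mathbb{R}$-bilinearity, antisymmetry, and the Jacobi identity. The overarching strategy, afforded by the uniqueness assertion of Proposition~\ref{t1.3}, is that any $s\in\mathcal{A}G$ is completely determined by its values $s_{d_{1}d_{2}}$ on products of two elements of $D$ (and, for Jacobi, by its values on triple products $d_{1}d_{2}d_{3}$); hence each identity reduces to a comparison of two words in $G$ in the letters $(t_{i})_{\pm d_{j}}$, to be established by repeated invocation of the commutativity results of Propositions~\ref{t1.1} and~\ref{t1.2}.

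Scalar homogeneity $[\alpha t_{1},t_{2}]=\alpha[t_{1},t_{2}]$ is immediate from the definition, since $(\alpha t)_{d}=t_{\alpha d}$. For additivity in the first argument, I would evaluate both sides of $[t_{1}+t_{1}',t_{2}]=[t_{1},t_{2}]+[t_{1}',t_{2}]$ at $d_{1}d_{2}$ and apply Proposition~\ref{t1.2} twice: on the left, to expand each $(t_{1}+t_{1}')_{\pm d_{1}}$ as $(t_{1})_{\pm d_{1}}(t_{1}')_{\pm d_{1}}$; on the right, to rewrite $([t_{1},t_{2}]+[t_{1}',t_{2}])_{d_{1}d_{2}}$ as the product $[t_{1},t_{2}]_{d_{1}d_{2}}\,[t_{1}',t_{2}]_{d_{1}d_{2}}$. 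The two resulting words are then matched by sliding $(t_{1}')_{\pm d_{1}}$-factors past $(t_{1})_{\pm d_{1}}$- and $(t_{2})_{\pm d_{2}}$-factors, legitimately because such factors commute by Proposition~\ref{t1.2}. Linearity in the second slot will follow from antisymmetry.

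Antisymmetry is essentially formal. Setting $s=-[t_{2},t_{1}]$, one has $s_{d_{1}d_{2}}=([t_{2},t_{1}]_{d_{1}d_{2}})^{-1}=(t_{2})_{-d_{1}}(t_{1})_{-d_{2}}(t_{2})_{d_{1}}(t_{1})_{d_{2}}$, which is precisely $[t_{1},t_{2}]_{d_{2}d_{1}}=[t_{1},t_{2}]_{d_{1}d_{2}}$, so the uniqueness clause of Proposition~\ref{t1.3} yields $[t_{1},t_{2}]=-[t_{2},t_{1}]$.

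The Jacobi identity is where I expect the real work. My plan is to imitate the argument in Lavendhomme~\cite{l1}: evaluate $[[t_{1},t_{2}],t_{3}]+[[t_{2},t_{3}],t_{1}]+[[t_{3},t_{1}],t_{2}]$ at $d_{1}d_{2}d_{3}$ and use Proposition~\ref{t1.2} to turn the sum into a product of three cyclically permuted words of the shape $(t_{k})_{-d_{k}}\,[t_{i},t_{j}]_{-d_{i}d_{j}}\,(t_{k})_{d_{k}}\,[t_{i},t_{j}]_{d_{i}d_{j}}$, each of which unpacks further into a word in the infinitesimal factors $(t_{\ell})_{\pm d_{m}}$. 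The crucial tool is the commutativity principle illustrated after Proposition~\ref{t1.2}, which permits reordering any two factors whose $d$-indices share a common $D$-element; repeated applications should collapse the full product to the identity of $G$. The hard part will be purely combinatorial bookkeeping, since $G$ itself is highly noncommutative and only the specific infinitesimal factors at hand can be permuted.
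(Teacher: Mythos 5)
The paper itself gives no argument here (it simply defers to \cite{n2}), so the only question is whether your outline would actually go through. Your treatment of homogeneity and of antisymmetry is correct, and the overall strategy --- reduce everything to identities between words in the $(t_i)_{\pm d_j}$ via the uniqueness clause of Proposition~\ref{t1.3} --- is the right one. But there is a concrete error in your justification of additivity: you propose to slide $(t_1')_{\pm d_1}$ past $(t_2)_{\pm d_2}$ ``because such factors commute by Proposition~\ref{t1.2}.'' They do not. Proposition~\ref{t1.2} and its corollary only give commutativity of factors evaluated at a \emph{common} infinitesimal (the same $d$, or two products sharing a factor); for independent $d_1,d_2\in D$ the elements $(t_1')_{d_1}$ and $(t_2)_{d_2}$ fail to commute in general --- indeed the bracket $[t_1',t_2]$ is by definition exactly the obstruction to this commutation, so if your claimed step were valid the whole Lie algebra would be abelian and the theorem would be vacuous. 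The correct manipulation is different: insert $(t_2)_{d_2}(t_2)_{-d_2}$ into the left-hand word so as to isolate the sub-word $(t_2)_{-d_2}(t_1')_{-d_1}(t_2)_{d_2}(t_1')_{d_1}=[t_1',t_2]_{d_1d_2}$, and then observe that this element, being of the form $s_{d_1d_2}=(d_2s)_{d_1}$, \emph{does} commute with $(t_1)_{d_1}$ by Proposition~\ref{t1.2}. That one legitimate commutation finishes the additivity proof.

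The same distinction matters even more for the Jacobi identity, which you leave entirely as ``combinatorial bookkeeping.'' That bookkeeping is the bulk of the theorem, and it succeeds only because the permutable factors are precisely the iterated commutators $[t_i,t_j]_{d_id_j}$ and the factors sharing an infinitesimal index --- not arbitrary pairs $(t_\ell)_{d_m}$, $(t_{\ell'})_{d_{m'}}$. As written, your plan licenses too much commutation; if one could really reorder any two factors whose indices ``share a common $D$-element'' in the loose sense you use for additivity, the identity would be trivial. So the proposal needs both the corrected mechanism above and an actual execution of the Hall--Witt-style word computation (as in Lavendhomme \S3.2 or \cite{n2}) before it counts as a proof.
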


\begin{proof}
By the same token as in our previous paper \cite{n2}.
\end{proof}

\subsection{Groupoids}

Groupoids are, roughly speaking, categories whose morphisms are always
invertible. Our standard reference on groupoids is MacKenzie \cite{ma1}. Given
a groupoid $G$ over a base $M$ with its object inclusion map $\mathrm{id}%
:M\rightarrow G$ and its source and target projections $\alpha,\beta
:G\rightarrow M$, we denote by $\mathfrak{B}(G)$ the totality of bisections of
$G $, i.e., the totality of mappings $\sigma:M\rightarrow G$ such that
$\alpha\circ\sigma$ is the identity mapping on $M$ and $\beta\circ\sigma$ is a
bijection of $M$ onto $M$. It is well known that $\mathfrak{B}(G)$ is a group
with respect to the operation $\ast$, where for any $\sigma,\rho
\in\mathfrak{B}(G)$, $\sigma\ast\rho\in\mathfrak{B}(G)$ is defined to be
\[
(\sigma\ast\rho)(x)=\sigma((\beta\circ\rho)(x))\rho(x)
\]
for any $x\in M$. It can easily be shown that the space $\mathfrak{B}(G)$ is
microlinear, provided that both $M$ and $G$ are microlinear, for which the
reader is referred to Proposition 6 of Nishimura \cite{n2}.

Given $x\in M$, we denote by $\mathcal{A}_{x}^{n}G$ the totality of mappings
$\gamma:D^{n}\rightarrow G$ with $\gamma(0,...,0)=\mathrm{id}_{x}$ and
$(\alpha\circ\gamma)(d_{1},...,d_{n})=x$ for any $(d_{1},...,d_{n})\in D^{n}$.
We denote by $\mathcal{A}^{n}G$ the set-theoretic union of $\mathcal{A}%
_{x}^{n}G$'s for all $x\in M$. In particular, we usually write $\mathcal{A}%
_{x}G$ and $\mathcal{A}G$ in place of $\mathcal{A}_{x}G$ and $\mathcal{A}G$
respectively. It is easy to see that $\mathcal{A}G$ is naturally a vector
bundle over $M$. A morphism $\varphi:H\rightarrow G$ of groupoids over $M$
naturally gives rise to a morphism $\varphi_{\ast}:\mathcal{A}H\rightarrow
\mathcal{A}G$ of vector bundles over $M$. As in \S 3.2.1 of Lavendhomme
\cite{l1}, where three distinct but equivalent viewpoints of vector fields are
presented, the totality $\Gamma(\mathcal{A}G)$ of sections of the vector
bundle $\mathcal{A}G$ can canonically be identified with the totality of
tangent vectors to $\mathfrak{B}(G)$ at $\mathrm{id}$, for which the reader is
referred to Nishimura \cite{n2}. We will enjoy this identification freely, and
we dare to write $\Gamma(\mathcal{A}G)$ for the totality of tangent vectors to
$\mathfrak{B}(G)$ at $\mathrm{id}$. Given $X,Y\in\Gamma(\mathcal{A}G)$, we
define a microsquare $Y\ast X$ to $\mathfrak{B}(G)$ at $\mathrm{id}$ to be
\[
(Y\ast X)(d_{1},d_{2})=Y_{d_{2}}\ast X_{d_{1}}%
\]
for any $(d_{1},d_{2})\in D^{2}$.

Given $\gamma\in\mathcal{A}^{n+1}G$ and $e\in D$, we define $\gamma_{e}^{i}%
\in\mathcal{A}^{n}G\mathcal{\ }$($1\leq i\leq n+1$) to be%

\[
\gamma_{e}^{i}(d_{1},...,d_{n})=\gamma(d_{1},...,d_{i-1},e,d_{i}%
,...,d_{n})\gamma(0,...,0,\underset{i}{e},0,...,0)^{-1}%
\]
for any $(d_{1},...,d_{n})\in D^{n}$. For our later use in the last section of
this paper, we introduce a variant of this notation. Given $\gamma
\in\mathcal{A}^{n+2}G$ and $e_{1},e_{2}\in D$, we define $\gamma_{e_{1},e_{2}%
}^{i,j}\in\mathcal{A}^{n}G\mathcal{\ }$($1\leq i<j\leq n+2$) to be
\[
\gamma_{e_{1},e_{2}}^{i,j}(d_{1},...,d_{n})=\gamma(d_{1},...,d_{i-1}%
,e_{1},d_{i},...,d_{j-2},e_{2},d_{j-1},...,d_{n})\gamma(0,...,0,\underset
{i}{e_{1}},...,\underset{j}{e_{2}},0,...,0)^{-1}%
\]
Given $\gamma\in\mathcal{A}^{2}G$, we define $\tau_{\gamma}^{1}\in
\mathcal{A}^{2}G$ to be
\[
\tau_{\gamma}^{1}(d_{1},d_{2})=\gamma(d_{1},0)
\]
for any $(d_{1},d_{2})\in D^{2}$. Similarly, given $\gamma\in\mathcal{A}^{2}%
G$, we define $\tau_{\gamma}^{2}\in\mathcal{A}^{2}G$ to be
\[
\tau_{\gamma}^{2}(d_{1},d_{2})=\gamma(0,d_{2})
\]
for any $(d_{1},d_{2})\in D^{2}$. Given $\gamma\in\mathcal{A}^{2}G$, we define
$\Sigma\gamma\in\mathcal{A}^{2}G$ to be%
\[
(\Sigma\gamma)(d_{1},d_{2})=\gamma(d_{2},d_{1})
\]
for any $(d_{1},d_{2})\in D^{2}$.

Any $\gamma\in\mathcal{A}^{2}G$ can canonically be identified with the mapping
$e\in D\mapsto\gamma_{e}^{1}\in\mathcal{A}G$, so that we can identify
$\mathcal{A}^{2}G$ and $(\mathcal{A}G)^{D}$. As is expected, this
identification enables us to define $\gamma_{2}\underset{1}{-}\gamma_{1}%
\in\mathcal{A}^{2}G$ for $\gamma_{1},\gamma_{2}\in\mathcal{A}^{2}G$, provided
that $\gamma_{1}(0,\cdot)=\gamma_{2}(0,\cdot)$. Similarly, we can define
$\gamma_{2}\underset{2}{-}\gamma_{1}\in\mathcal{A}^{2}G$ for $\gamma
_{1},\gamma_{2}\in\mathcal{A}^{2}G$, provided that $\gamma_{1}(\cdot
,0)=\gamma_{2}(\cdot,0)$. Given $\gamma_{1},\gamma_{2}\in\mathcal{A}^{2}G$,
their strong difference $\gamma_{2}\overset{\cdot}{-}\gamma_{1}\in
\mathcal{A}G$ is defined, provided that $\gamma_{1}\mid_{D(2)}=\gamma_{2}%
\mid_{D(2)}$. Lavendhomme's \cite{l1} treatment of strong difference
$\overset{\cdot}{-}$ in \S 3.4 carries over mutatis mutandis to our present
context. We note in passing the following simple proposition on strong
difference $\overset{\cdot}{-}$, which is not to be seen in our standard
reference \cite{l1} on synthetic differential geometry.

\begin{proposition}
\label{t1.5}For any $\gamma_{1},\gamma_{2},\gamma_{3}\in\mathcal{A}^{2}G$ with
$\gamma_{1}\mid_{D(2)}=\gamma_{2}\mid_{D(2)}=\gamma_{3}\mid_{D(2)} $, we have
\[
(\gamma_{2}\overset{\cdot}{-}\gamma_{1})+(\gamma_{3}\overset{\cdot}{-}%
\gamma_{2})+(\gamma_{1}\overset{\cdot}{-}\gamma_{3})=0
\]

\end{proposition}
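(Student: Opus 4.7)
The plan is to reduce the identity to a statement in the genuine group $\mathfrak{B}(G)$ of bisections, where the strong difference admits an explicit multiplicative description and Propositions~\ref{t1.1}--\ref{t1.2} apply directly. Specifically, I would lift each $\gamma_i \in \mathcal{A}_x^2 G$ (with $x$ the common base point) to a microsquare $\tilde{\gamma}_i \in \mathcal{A}_{\mathrm{id}}^2 \mathfrak{B}(G)$ satisfying $\tilde{\gamma}_i(d_1,d_2)(x) = \gamma_i(d_1,d_2)$, arranged so that $\tilde{\gamma}_1|_{D(2)} = \tilde{\gamma}_2|_{D(2)} = \tilde{\gamma}_3|_{D(2)}$ by first fixing a common bisection extension of $\gamma_i|_{D(2)}$ and then extending each $\gamma_i$ independently from there. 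In the group $\mathfrak{B}(G)$ the strong difference has the explicit form
\[
(\tilde{\gamma}_j \overset{\cdot}{-} \tilde{\gamma}_i)_{d_1 d_2} = \tilde{\gamma}_j(d_1,d_2) \ast \tilde{\gamma}_i(d_1,d_2)^{-1} \qquad \bigl((d_1,d_2) \in D^2\bigr),
\]
as in the standard construction of Lavendhomme \cite{l1}~\S 3.4: the right-hand side is a map $D^2 \to \mathfrak{B}(G)$ sending $D(2)$ to $\mathrm{id}$, and by microlinearity such a map factors uniquely through the multiplication $D^2 \to D$ to determine a tangent vector.

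Next, setting $t_{ij} = \tilde{\gamma}_j \overset{\cdot}{-} \tilde{\gamma}_i$, I would evaluate $(t_{12} + t_{23} + t_{31})_{d_1 d_2}$. By Proposition~\ref{t1.2} applied in the group $\mathfrak{B}(G)$, this equals the pairwise-commuting product $(t_{12})_{d_1 d_2} \ast (t_{23})_{d_1 d_2} \ast (t_{31})_{d_1 d_2}$. Substituting the explicit formula yields
\[
\tilde{\gamma}_2 \ast \tilde{\gamma}_1^{-1} \ast \tilde{\gamma}_3 \ast \tilde{\gamma}_2^{-1} \ast \tilde{\gamma}_1 \ast \tilde{\gamma}_3^{-1}
\]
(suppressing the argument $(d_1,d_2)$), and using the pairwise commutativity to swap the second and third block gives $\tilde{\gamma}_2 \ast (\tilde{\gamma}_1^{-1} \ast \tilde{\gamma}_1) \ast (\tilde{\gamma}_3^{-1} \ast \tilde{\gamma}_3) \ast \tilde{\gamma}_2^{-1} = \mathrm{id}$. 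Hence $(t_{12} + t_{23} + t_{31})_{d_1 d_2} = \mathrm{id}$ for every $(d_1,d_2) \in D^2$, and by the uniqueness of the factorization of such a vanishing map through $D$ (i.e.\ the injectivity of $t \mapsto \bigl((d_1,d_2)\mapsto t_{d_1 d_2}\bigr)$) we conclude $t_{12} + t_{23} + t_{31} = 0$ in the tangent module at $\mathrm{id}$. Evaluating at $x$ recovers the claimed identity in $\mathcal{A}_x G$.

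The main technical point is arranging the three compatible bisection lifts with a common restriction to $D(2)$; once this is secured, the cocycle identity reduces by Proposition~\ref{t1.2} to the one-step algebraic telescoping in $\mathfrak{B}(G)$ above. Microlinearity enters only to guarantee the multiplicative formula for strong difference and the uniqueness of factorization through $D$, both of which are standard tools from the synthetic differential calculus of groups.
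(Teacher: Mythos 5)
The paper states Proposition \ref{t1.5} without any proof, so there is no argument of the author's to compare yours against; judged on its own, your proposal is sound from the moment you are inside the group $\mathfrak{B}(G)$, but it has a genuine gap at its very first step. You need to lift each $\gamma_{i}\in\mathcal{A}_{x}^{2}G$ --- an arbitrary microsquare in the source fibre $\alpha^{-1}(x)$ --- to a microsquare $\tilde{\gamma}_{i}$ of \emph{global bisections} with $\tilde{\gamma}_{i}(d_{1},d_{2})(x)=\gamma_{i}(d_{1},d_{2})$, and moreover to arrange the three lifts to agree on $D(2)$. Nothing in the standing hypotheses (microlinearity of $M$, $G$ and hence of $\mathfrak{B}(G)$) supplies such an extension: the identification the paper actually uses is between global sections $\Gamma(\mathcal{A}G)$ and tangent vectors to $\mathfrak{B}(G)$ at $\mathrm{id}$, which says nothing about prolonging a single pointwise infinitesimal datum at $x$ to a bisection-valued one; classically this is done with local bisections and bump functions, and your ``fix a common extension of $\gamma_{i}\mid_{D(2)}$, then extend each $\gamma_{i}$ from there'' presupposes an even stronger relative extension property. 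Note also that you cannot simply run the multiplicative computation inside $G$ itself: the composites $\gamma_{j}(d_{1},d_{2})\gamma_{i}(d_{1},d_{2})^{-1}$ exist in the groupoid, but their sources $\beta(\gamma_{i}(d_{1},d_{2}))$ vary with $(d_{1},d_{2})$, so they do not define elements of $\mathcal{A}_{x}G$ and the factorization-through-$D$ argument is unavailable. That is exactly why the group structure looked indispensable to you, and why the missing lift is not a removable formality.

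The standard way to close the gap is to drop the group structure altogether and use the affine calculus of $\overset{\cdot}{+}$ and $\overset{\cdot}{-}$ from \S~3.4 of Lavendhomme \cite{l1}, which the paper asserts carries over mutatis mutandis and which lives entirely in the microlinear fibre $\alpha^{-1}(x)$: from $\gamma\overset{\cdot}{+}(\gamma'\overset{\cdot}{-}\gamma)=\gamma'$, $(\gamma\overset{\cdot}{+}t)\overset{\cdot}{+}s=\gamma\overset{\cdot}{+}(t+s)$ and $\gamma\overset{\cdot}{-}\gamma=0$ one gets $\gamma_{1}\overset{\cdot}{+}\bigl((\gamma_{2}\overset{\cdot}{-}\gamma_{1})+(\gamma_{3}\overset{\cdot}{-}\gamma_{2})+(\gamma_{1}\overset{\cdot}{-}\gamma_{3})\bigr)=\gamma_{1}$, and freeness of the action forces the sum to vanish. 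For what it is worth, everything in your argument after the lift --- the formula $(\tilde{\gamma}_{j}\overset{\cdot}{-}\tilde{\gamma}_{i})_{d_{1}d_{2}}=\tilde{\gamma}_{j}(d_{1},d_{2})\ast\tilde{\gamma}_{i}(d_{1},d_{2})^{-1}$ (which does require the small extra step of exhibiting the interpolating map on $D^{2}\vee D$, exactly as in the paper's proof of Proposition \ref{t4.5}), the conversion of the sum into a pairwise-commuting product via Proposition \ref{t1.2}, the telescoping, and the cancellation of $d_{1}d_{2}$ --- is correct and very much in the spirit of the paper's other computations; in particular it does prove the proposition in the special case of microsquares on $\mathfrak{B}(G)$, which is the only case invoked in the proof of Theorem \ref{t4.4}.
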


\subsection{Differential Forms}

Given a groupoid $G$ and a vector bundle $E$ over the same space $M$, the
space $\mathbf{C}^{n}(G,E)$ of \textit{differential }$n$\textit{-forms with
values in} $E$ consists of all mappings $\omega$ from $\mathcal{A}^{n}G$ to
$E$ whose restriction to $\mathcal{A}_{x}^{n}G$ for each $x\in M$ takes values
in $E_{x}$ satisfying the following $n$-homogeneous and alternating properties:

\begin{enumerate}
\item We have
\[
\omega(a\underset{i}{\cdot}\gamma)=a\omega(\gamma)\text{ \ \ \ \ \ }(1\leq
i\leq n)
\]
for any $a\in\mathbb{R}$ and any $\gamma\in\mathcal{A}_{x}^{n}G$, where
$a\underset{i}{\cdot}\gamma\in\mathcal{A}_{x}^{n}G$ is defined to be
\[
(a\underset{i}{\cdot}\gamma)(d_{1},...,d_{n})=\gamma(d_{1},...,d_{i-1}%
,ad_{i},d_{i+1},...d_{n})
\]
for any $(d_{1},...,d_{n})\in D^{n}$.

\item We have
\[
\omega(\gamma\circ D^{\theta})=\mathrm{sign}(\theta)\omega(\gamma)
\]
for any permutation $\theta$ of $\{1,...,n\}$, where $D^{\theta}%
:D^{n}\rightarrow D^{n}$ permutes the $n$ coordinates by $\theta$.
\end{enumerate}

\section{Connection}

Let $\pi:H\rightarrow G$ be a morphism of groupoids over $M$. Let $L$ be the
kernel of $\pi$ with its canonical injection $\iota:L\rightarrow H$. It is
clear that $L$ is a group bundle over $M$. These entities shall be fixed
throughout the rest of the paper. Thus we have an exact sequence of groupoids
as follows:
\[
0\rightarrow L\overset{\iota}{\rightarrow}H\overset{\pi}{\rightarrow}G
\]
A \textit{connection} $\nabla$ \textit{with respect to} $\pi$ is a morphism
$\nabla:\mathcal{A}G\rightarrow\mathcal{A}H$ of vector bundles over $M$ such
that the composition $\pi_{\ast}\circ\nabla$ is the identity mapping of
$\mathcal{A}G$. A connection $\nabla$ with respect to $\pi$ shall be fixed
throughout the rest of the paper. If $G$ happens to be $M\times M$ (the pair
groupoid of $M$) with $\pi$ being the projection $h\in H\mapsto(\alpha
(h),\beta(h))\in M\times M$, our present notion of connection degenerates into
the classical one of infinitesimal connection.

Given $\gamma\in\mathcal{A}^{n+1}G$, we define $\gamma_{i}\in\mathcal{A}G $
($1\leq i\leq n+1$) to be
\[
\gamma_{i}(d)=\gamma(0,...,0,\underset{i}{d},0,...,0)
\]
for any $d\in D$. As in our previous paper \cite{n3}, we have

\begin{theorem}
Given $\omega\in\mathbf{C}^{n}(G,\mathcal{A}L)$, there exists a unique
$\mathbf{d}_{\nabla}\omega\in\mathbf{C}^{n+1}(G,\mathcal{A}L)$ such that
\begin{align*}
&  ((\mathbf{d}_{\nabla}\omega)(\gamma))_{d_{1}...d_{n+1}}\\
&  =\prod_{i=1}^{n+1}\{(\omega(\gamma_{0}^{i}))_{d_{1}...\widehat{d_{i}%
}...d_{n+1}}((\nabla\gamma_{i})_{d_{i}})^{-1}(\omega(\gamma_{d_{i}}%
^{i}))_{-d_{1}...\widehat{d_{i}}...d_{n+1}}(\nabla\gamma_{i})_{d_{i}%
}\}^{(-1)^{i}}%
\end{align*}
for any $\gamma\in\mathcal{A}^{n+1}G$ and any $(d_{1},...,d_{n+1})\in D^{n+1}$.
\end{theorem}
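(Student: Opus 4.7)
The plan is to fix $\gamma\in\mathcal{A}^{n+1}G$ and regard the right-hand side of the defining equation as a mapping $f_{\gamma}:D^{n+1}\to L$ in the variables $(d_{1},\ldots,d_{n+1})$, then invoke the microlinearity of $L$ to extract the unique element of $\mathcal{A}L$ whose value at $d_{1}\cdots d_{n+1}$ coincides with $f_{\gamma}(d_{1},\ldots,d_{n+1})$. The mechanism is the standard fact that any mapping $D^{n+1}\to L$ reducing to the identity whenever some coordinate is zero factors uniquely through the multiplication $D^{n+1}\to D$, and hence determines a unique element of $\mathcal{A}L$. Since the defining formula constrains only values of the form $d_{1}\cdots d_{n+1}\in D$, this microlinearity step is precisely what turns a formula into a well-defined tangent vector.

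The first real task is therefore to verify that $f_{\gamma}$ collapses to the identity as soon as some $d_{k}=0$. For the $k$-th factor in the product, $(\nabla\gamma_{k})_{0}=\mathrm{id}$ and $\gamma^{k}_{d_{k}}=\gamma^{k}_{0}$ when $d_{k}=0$, so this factor simplifies to $(\omega(\gamma^{k}_{0}))_{e_{k}}(\omega(\gamma^{k}_{0}))_{-e_{k}}$ with $e_{k}=d_{1}\cdots\widehat{d_{k}}\cdots d_{n+1}$, which is the identity by the corollary to Proposition \ref{t1.1}. For a factor indexed by $j\neq k$, the argument $d_{1}\cdots\widehat{d_{j}}\cdots d_{n+1}$ contains $d_{k}=0$ as a factor and hence is itself zero, so both $\omega$-terms reduce to the identity and the surviving expression $(\nabla\gamma_{j})_{d_{j}}^{-1}(\nabla\gamma_{j})_{d_{j}}$ is the identity as well. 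Microlinearity then yields the existence and uniqueness of $(\mathbf{d}_{\nabla}\omega)(\gamma)\in\mathcal{A}L$; its base point in $M$ coincides with that of $\gamma$.

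To place $\mathbf{d}_{\nabla}\omega$ in $\mathbf{C}^{n+1}(G,\mathcal{A}L)$ I would verify the $(n+1)$-homogeneity and the alternating property by direct manipulation of the defining formula. Homogeneity in the $i$-th variable follows once one notes that $(a\underset{i}{\cdot}\gamma)_{j}=\gamma_{j}$ for $j\neq i$ and $(a\underset{i}{\cdot}\gamma)_{i}=a\gamma_{i}$, while the corresponding $\gamma^{j}_{0}$ and $\gamma^{j}_{d_{j}}$ are either unchanged or suitably rescaled; combining this with the $n$-homogeneity of $\omega$ and the $\mathbb{R}$-linearity of $\nabla$ yields the desired scaling of $f_{\gamma}$ at $d_{1}\cdots d_{n+1}$. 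Antisymmetry under transposing the $i$-th and $(i+1)$-th variables follows from the corresponding alternating property of $\omega$, once the signs $(-1)^{i}$ and $(-1)^{i+1}$ are tracked through the re-indexed product.

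The principal obstacle I foresee is not the existence step, which is essentially formal, but the verification of the alternating property: because the product $\prod_{i}\{\cdots\}^{(-1)^{i}}$ lives in the noncommutative group $L$, the naive permutation argument familiar from classical exterior calculus does not apply verbatim, and one must exploit the infinitesimal commutativity results of Propositions \ref{t1.1} and \ref{t1.2}, which permit the free rearrangement of distinct infinitesimal factors on the fibre over $d_{1}\cdots d_{n+1}$, in order to re-order the product and read off the correct sign.
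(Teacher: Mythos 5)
Your strategy is sound, and it is exactly the mechanism this paper relies on elsewhere: the paper itself offers no proof of this theorem (it simply defers to \cite{n3}), but your argument --- check that the right-hand side collapses to the identity whenever some $d_{k}=0$, then factor it uniquely through the multiplication map $D^{n+1}\rightarrow D$ by microlinearity --- is precisely how Proposition \ref{t4.1} constructs the curvature form, and your homogeneity computation parallels the proof of Proposition \ref{t4.2}. Your case analysis on the coordinate hyperplanes is correct (the $k$-th factor dies by $t_{-e}=(t_{e})^{-1}$, the others because $d_{1}\cdots\widehat{d_{j}}\cdots d_{n+1}$ acquires the factor $d_{k}=0$). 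For the alternating property, the one point worth making explicit --- and it also disposes of your worry about noncommutativity --- is that each individual factor $F_{i}(d_{1},\ldots,d_{n+1})$ of the product is \emph{itself} the identity whenever some coordinate vanishes (if $d_{i}=0$ it reduces to $(\omega(\gamma_{0}^{i}))_{e_{i}}(\omega(\gamma_{0}^{i}))_{-e_{i}}$, if $d_{k}=0$ for $k\neq i$ the two $\omega$-terms vanish), hence factors as $F_{i}=(t_{i})_{d_{1}\cdots d_{n+1}}$ for a unique $t_{i}\in\mathcal{A}L$; Proposition \ref{t1.2} then makes all the factors mutually commute over the common fibre $d_{1}\cdots d_{n+1}$, so the product may be freely reordered, $\mathbf{d}_{\nabla}\omega(\gamma)=\sum_{i}(-1)^{i}t_{i}$, and the sign bookkeeping under an adjacent transposition reduces to the alternating property of $\omega$ together with $t_{-e}=(t_{e})^{-1}$. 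With that observation filled in, the proposal is complete and consistent with the paper's methods.
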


\section{A Lift of the Connection $\nabla$ to microsquares}

Let us define a mapping $\mathcal{A}^{2}G\rightarrow\mathcal{A}^{2}H$, which
shall be denoted by the same symbol $\nabla$ hopefully without any possible
confusion, to be
\[
\nabla\gamma(d_{1},d_{2})=(\nabla\gamma_{d_{1}}^{1})_{d_{2}}(\nabla\gamma
_{0}^{2})_{d_{1}}%
\]
for any $\gamma\in\mathcal{A}^{2}G$.

It is easy to see that

\begin{proposition}
\label{t3.1}For any $\gamma\in\mathcal{A}^{2}G$ and any $a\in\mathbb{R}$, we
have
\[%
\begin{array}
[c]{c}%
\nabla(a\underset{1}{\cdot}\gamma)=a\underset{1}{\cdot}\nabla\gamma\\
\nabla(a\underset{2}{\cdot}\gamma)=a\underset{2}{\cdot}\nabla\gamma
\end{array}
\]

\end{proposition}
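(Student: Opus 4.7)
The plan is to prove both identities by direct unfolding of the definitions, using only the fiberwise $\mathbb{R}$-linearity of $\nabla:\mathcal{A}G\to\mathcal{A}H$ together with the tangent-vector identity $(a\cdot t)_d = t_{ad}$ which is immediate from Proposition \ref{t1.2} (or, more elementarily, from the reparametrization definition of scalar multiplication of tangent vectors).

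For the first identity, I would set $\gamma' = a\underset{1}{\cdot}\gamma$ so $\gamma'(d_1,d_2) = \gamma(ad_1,d_2)$, and compute the two constituents $(\gamma')^1_{d_1}$ and $(\gamma')^2_0$ appearing in the definition of $\nabla\gamma'(d_1,d_2)$. A direct substitution shows
\[
(\gamma')^1_{d_1}(d_2) = \gamma(ad_1,d_2)\gamma(ad_1,0)^{-1} = \gamma^1_{ad_1}(d_2),
\]
while
\[
(\gamma')^2_0(d) = \gamma(ad,0)\gamma(0,0)^{-1} = \gamma^2_0(ad) = (a\gamma^2_0)(d).
\]
Applying $\nabla$ (and using fiberwise linearity on the second equation), and then evaluating at $d_2$ and $d_1$ respectively, I obtain
\[
\nabla\gamma'(d_1,d_2) = (\nabla\gamma^1_{ad_1})_{d_2}\,(\nabla\gamma^2_0)_{ad_1} = \nabla\gamma(ad_1,d_2) = (a\underset{1}{\cdot}\nabla\gamma)(d_1,d_2),
\]
which is exactly what is required.

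For the second identity the bookkeeping is the analogue: setting $\gamma'' = a\underset{2}{\cdot}\gamma$, one finds that $(\gamma'')^1_{d_1}(d_2) = \gamma^1_{d_1}(ad_2)$ so $(\gamma'')^1_{d_1} = a\gamma^1_{d_1}$, and $(\gamma'')^2_0 = \gamma^2_0$ unchanged. Invoking linearity of $\nabla$ again and substituting into the defining formula for $\nabla\gamma''$ yields $(\nabla\gamma^1_{d_1})_{ad_2}(\nabla\gamma^2_0)_{d_1}$, matching $\nabla\gamma(d_1,ad_2) = (a\underset{2}{\cdot}\nabla\gamma)(d_1,d_2)$.

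There is really no substantive obstacle here: the whole content is to recognize that the asymmetric defining formula $\nabla\gamma(d_1,d_2) = (\nabla\gamma^1_{d_1})_{d_2}(\nabla\gamma^2_0)_{d_1}$ interacts with rescaling in the first variable through both factors (one via the parameter $d_1$ in the subscript of $\gamma^1_{d_1}$, the other via rescaling of the tangent vector $\gamma^2_0$ itself), whereas rescaling in the second variable only touches the first factor. The care required is just to keep these two roles of $d_1$ distinct; the rest is bookkeeping.
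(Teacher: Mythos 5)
Your computation is correct and is exactly the routine verification that the paper omits (it states Proposition \ref{t3.1} with only ``It is easy to see that''): unfold $(a\underset{i}{\cdot}\gamma)^{1}_{d_1}$ and $(a\underset{i}{\cdot}\gamma)^{2}_{0}$, observe that the rescaling lands either in the subscript parameter or as a scalar multiple of the tangent vector, and push the latter through $\nabla$ by fiberwise linearity. One trivial quibble: the identity $(a\cdot t)_{d}=t_{ad}$ is the definition of scalar multiplication on $\mathcal{A}G$ rather than a consequence of Proposition \ref{t1.2} (which concerns sums), but you note the correct justification parenthetically, so nothing is missing.
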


\begin{corollary}
\label{t3.2}For any $\gamma_{1},\gamma_{2}\in\mathcal{A}^{2}G$, we have
\[%
\begin{array}
[c]{c}%
\nabla(\gamma_{2}\underset{1}{-}\gamma_{1})=\nabla\gamma_{2}\underset{1}%
{-}\nabla\gamma_{1}\text{ provided that }\gamma_{1}(0,\cdot)=\gamma
_{2}(0,\cdot)\text{;}\\
\nabla(\gamma_{2}\underset{2}{-}\gamma_{1})=\nabla\gamma_{2}\underset{2}%
{-}\nabla\gamma_{1}\text{ provided that }\gamma_{1}(\cdot,0)=\gamma_{2}%
(\cdot,0)\text{.}%
\end{array}
\]

\end{corollary}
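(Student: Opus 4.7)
The plan is to derive Corollary \ref{t3.2} as a direct consequence of Proposition \ref{t3.1}, exploiting the fact that in the microlinear setting the partial differences $\underset{i}{-}$ are constructed from the scalar actions $a\underset{i}{\cdot}$, which the lifted $\nabla$ already respects by Proposition \ref{t3.1}. Concretely, the fibers of the two restriction maps $\gamma\mapsto\gamma(0,\cdot)$ and $\gamma\mapsto\gamma(\cdot,0)$ from $\mathcal{A}^{2}G$ to $\mathcal{A}G$ each carry an $\mathbb{R}$-module structure whose scalar multiplication is exactly $a\underset{1}{\cdot}$, respectively $a\underset{2}{\cdot}$, and whose subtraction, $\underset{1}{-}$ respectively $\underset{2}{-}$, is completely determined by that scalar action together with the microlinear pasting operation. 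Any map between microsquare spaces that is fiber-preserving for the relevant restriction and that commutes with both coordinate scalar actions must therefore commute with the fiberwise subtraction.

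The first step of the proof is to check that the matching hypotheses are preserved by $\nabla$, so that the right-hand sides of the two claimed identities are well-defined. From the definition
\[
\nabla\gamma(d_{1},d_{2})=(\nabla\gamma_{d_{1}}^{1})_{d_{2}}(\nabla\gamma_{0}^{2})_{d_{1}},
\]
setting $d_{1}=0$ yields $\nabla\gamma(0,d_{2})=(\nabla\gamma_{0}^{1})_{d_{2}}$, which depends on $\gamma$ only through $\gamma(0,\cdot)=\gamma_{0}^{1}$; hence $\gamma_{1}(0,\cdot)=\gamma_{2}(0,\cdot)$ forces $\nabla\gamma_{1}(0,\cdot)=\nabla\gamma_{2}(0,\cdot)$. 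Symmetrically, setting $d_{2}=0$ gives $\nabla\gamma(d_{1},0)=(\nabla\gamma_{0}^{2})_{d_{1}}$, so $\gamma_{1}(\cdot,0)=\gamma_{2}(\cdot,0)$ forces $\nabla\gamma_{1}(\cdot,0)=\nabla\gamma_{2}(\cdot,0)$.

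Having secured well-definedness, the second step is to invoke the characterization of $\underset{i}{-}$ as the fiberwise subtraction of the $\mathbb{R}$-module structure on the appropriate fiber of $\mathcal{A}^{2}G\to\mathcal{A}G$. Proposition \ref{t3.1} asserts that $\nabla$ intertwines the scalar actions $a\underset{i}{\cdot}$ for both $i=1$ and $i=2$, and the first step shows that $\nabla$ sends matching pairs to matching pairs. Since the fiberwise module structure on each side is fully determined by the scalar action and microlinear pasting, $\nabla$ automatically intertwines the fiberwise subtraction, yielding the two identities.

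The main obstacle is the implicit appeal to the fact that $\underset{i}{-}$ is recovered from $a\underset{i}{\cdot}$ via microlinearity rather than defined independently; this is standard from Lavendhomme's \cite{l1} treatment of microsquares but is not spelled out in the excerpt and must be cited carefully. Once that identification is in hand, no further computation beyond the evaluations $\nabla\gamma(0,d_{2})=(\nabla\gamma_{0}^{1})_{d_{2}}$ and $\nabla\gamma(d_{1},0)=(\nabla\gamma_{0}^{2})_{d_{1}}$ is required, and the corollary reads off Proposition \ref{t3.1}.
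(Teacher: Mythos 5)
Your proposal is correct and takes essentially the same route as the paper, whose entire proof is to derive the corollary from Proposition \ref{t3.1} via Proposition 10 of \S 1.2 of Lavendhomme \cite{l1} (a homogeneous map between Euclidean $\mathbb{R}$-modules is linear, hence commutes with differences). Your explicit verification that $\nabla\gamma(0,\cdot)$ and $\nabla\gamma(\cdot,0)$ depend only on $\gamma(0,\cdot)$ and $\gamma(\cdot,0)$ respectively, so that the right-hand sides are well defined, is a detail the paper leaves implicit.
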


\begin{proof}
This follows from the above proposition by Proposition 10 of \S 1.2 of
Lavendhomme \cite{l1}.
\end{proof}

\begin{proposition}
\label{t3.3}For any $t\in\mathcal{A}^{1}G$, we define $\varepsilon_{t}%
\in\mathcal{A}^{2}G$ to be
\[
\varepsilon_{t}(d_{1},d_{2})=t(d_{1}d_{2})
\]
Then we have
\[
(\nabla\varepsilon_{t})(d_{1},d_{2})=(\nabla t)(d_{1}d_{2})
\]
for any $d_{1},d_{2}\in D$.
\end{proposition}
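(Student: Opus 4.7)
The plan is to unravel the definition of the lifted connection $\nabla : \mathcal{A}^{2}G \to \mathcal{A}^{2}H$ applied to $\gamma = \varepsilon_{t}$, and recognize the slice maps $\gamma_{d_{1}}^{1}$ and $\gamma_{0}^{2}$ as very simple elements of $\mathcal{A}G$ to which the vector-bundle-morphism properties of $\nabla$ apply directly.

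First I would compute the two auxiliary elements entering the formula
$(\nabla\gamma)(d_{1},d_{2})=(\nabla\gamma_{d_{1}}^{1})_{d_{2}}(\nabla\gamma_{0}^{2})_{d_{1}}$ with $\gamma=\varepsilon_{t}$. Using $t(0)=\mathrm{id}_{x}$ one checks that $\varepsilon_{t}(e,0)=t(0)=\mathrm{id}_{x}$, so the definition of $\gamma_{e}^{i}$ collapses and gives $\varepsilon_{t,d_{1}}^{1}(d)=t(d_{1}d)$ and $\varepsilon_{t,0}^{2}(d)=t(0)\cdot t(0)^{-1}=\mathrm{id}_{x}$. In other words, $\varepsilon_{t,d_{1}}^{1}$ is exactly the rescaled tangent vector $d_{1}\cdot t\in \mathcal{A}G$ (scalar multiplication by $d_{1}\in D\subset\mathbb{R}$), while $\varepsilon_{t,0}^{2}$ is the zero vector in $\mathcal{A}_{x}G$.

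Next I would invoke the fact that $\nabla:\mathcal{A}G\to\mathcal{A}H$ is a morphism of vector bundles over $M$, hence $\mathbb{R}$-linear on each fibre. This yields $\nabla(d_{1}\cdot t)=d_{1}\cdot\nabla t$, which unpacks to $(\nabla(d_{1}\cdot t))_{d_{2}}=(\nabla t)(d_{1}d_{2})$ (note $d_{1}d_{2}\in D$ since $(d_{1}d_{2})^{2}=0$), and it gives $\nabla(\varepsilon_{t,0}^{2})=0$, so $(\nabla\varepsilon_{t,0}^{2})_{d_{1}}=\mathrm{id}_{x}$.

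Plugging both observations into the defining formula,
\[
(\nabla\varepsilon_{t})(d_{1},d_{2})=(\nabla\varepsilon_{t,d_{1}}^{1})_{d_{2}}(\nabla\varepsilon_{t,0}^{2})_{d_{1}}=(\nabla t)(d_{1}d_{2})\cdot\mathrm{id}_{x}=(\nabla t)(d_{1}d_{2}),
\]
which is the desired identity. There is essentially no hard step here: the only point that requires care is the bookkeeping in identifying $\varepsilon_{t,d_{1}}^{1}$ with $d_{1}\cdot t$, after which the conclusion is forced by $\mathbb{R}$-linearity of $\nabla$.
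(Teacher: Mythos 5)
Your proof is correct and follows essentially the same route as the paper: the paper's one-line computation $(\nabla\varepsilon_{t})(d_{1},d_{2})=(\nabla(d_{1}t))(d_{2})=(d_{1}\nabla t)(d_{2})=(\nabla t)(d_{1}d_{2})$ is exactly your identification of $\varepsilon_{t,d_{1}}^{1}$ with $d_{1}\cdot t$ followed by fibrewise linearity of $\nabla$, with the trivial factor $(\nabla\varepsilon_{t,0}^{2})_{d_{1}}=\mathrm{id}_{x}$ suppressed. You have merely made explicit the bookkeeping that the paper leaves implicit.
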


\begin{proof}
It suffices to note that
\[
(\nabla\varepsilon_{t})(d_{1},d_{2})=(\nabla(d_{1}t))(d_{2})=(d_{1}\nabla
t)(d_{2})=(\nabla t)(d_{1}d_{2})
\]

\end{proof}

\begin{theorem}
\label{t3.4}For any $\gamma_{1},\gamma_{2}\in\mathcal{A}^{2}G$ with
$\gamma_{1}\mid_{D(2)}=\gamma_{2}\mid_{D(2)}$, we have
\[
\nabla(\gamma_{2}\overset{\cdot}{-}\gamma_{1})=\nabla\gamma_{2}\overset{\cdot
}{-}\nabla\gamma_{1}%
\]

\end{theorem}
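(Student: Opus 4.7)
The plan is to reduce the claim to Proposition~\ref{t3.3} and Corollary~\ref{t3.2} by exploiting the standard characterization of strong difference, as adapted from Lavendhomme \S3.4 to the multiplicative groupoid setting. Given $\gamma_{1},\gamma_{2}\in\mathcal{A}^{2}G$ agreeing on $D(2)$, the strong difference $t:=\gamma_{2}\overset{\cdot}{-}\gamma_{1}\in\mathcal{A}G$ is the unique tangent vector such that $\gamma_{2}$ is recovered from $\gamma_{1}$ by an infinitesimal twist encoded by $\varepsilon_{t}(d_{1},d_{2})=t(d_{1}d_{2})$. The goal is to push $\nabla$ through this defining relation. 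Before doing so I would first verify that $\nabla\gamma_{1}|_{D(2)}=\nabla\gamma_{2}|_{D(2)}$, so that the right-hand side is even defined: the factor $(\nabla\gamma_{0}^{2})_{d_{1}}$ in the definition of $\nabla\gamma$ depends only on $\gamma(0,\cdot)$, where $\gamma_{1}$ and $\gamma_{2}$ coincide since $\{0\}\times D\subseteq D(2)$; and on $D(2)$ the factor $(\nabla\gamma_{d_{1}}^{1})_{d_{2}}$ agrees for $\gamma_{1}$ and $\gamma_{2}$ because the discrepancy between $(\gamma_{1})_{d_{1}}^{1}$ and $(\gamma_{2})_{d_{1}}^{1}$ lives at the infinitesimal scale $d_{1}d_{2}$, which vanishes on $D(2)$, combined with the $\mathbb{R}$-linearity of $\nabla$ on each fibre.

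With the right-hand side well-defined, I would set $t:=\gamma_{2}\overset{\cdot}{-}\gamma_{1}$ and apply $\nabla$ to the multiplicative form of the defining relation. Proposition~\ref{t3.3} immediately rewrites $\nabla\varepsilon_{t}$ as $\varepsilon_{\nabla t}$, while the definition of $\nabla$ on microsquares handles the $\gamma_{i}$ terms directly. The uniqueness clause in the characterization of strong difference, applied now in $H$ (and consistent with $\pi_{\ast}\circ\nabla=\mathrm{id}$), should then identify $\nabla\gamma_{2}\overset{\cdot}{-}\nabla\gamma_{1}$ with $\nabla t$, as desired.

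The main obstacle will be controlling the noncommutative groupoid product implicit in $\nabla\gamma(d_{1},d_{2})=(\nabla\gamma_{d_{1}}^{1})_{d_{2}}(\nabla\gamma_{0}^{2})_{d_{1}}$: pushing $\nabla$ across the strong-difference twist requires moving the new $\varepsilon_{\nabla t}$ factor into the correct position in the product. For this I expect to invoke infinitesimal commutativity in the spirit of Propositions~\ref{t1.1} and~\ref{t1.2}, exploiting the fact that $\varepsilon_{\nabla t}$ sits at order $d_{1}d_{2}$ and hence commutes with surrounding infinitesimal factors up to higher order. A cleaner route may be to apply Corollary~\ref{t3.2} first to reduce to the sub-case where $\gamma_{1}$ and $\gamma_{2}$ additionally agree on $D\times\{0\}$ (using $\underset{2}{-}$) or on $\{0\}\times D$ (using $\underset{1}{-}$), collapsing most of the bookkeeping into a direct evaluation using Proposition~\ref{t3.3}.
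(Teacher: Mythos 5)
Your instinct to funnel everything through Proposition \ref{t3.3} and Corollary \ref{t3.2} is right, and the ``cleaner route'' you float in your final sentence is essentially the paper's actual proof. But your primary argument, as written, has a gap: you never make precise the ``multiplicative form of the defining relation'' of strong difference, and in a groupoid it is genuinely problematic. The would-be twist relation $\gamma_{2}(d_{1},d_{2})=t_{d_{1}d_{2}}\,\gamma_{1}(d_{1},d_{2})$ (or with the factor on the other side) does not even type-check in general: $t=\gamma_{2}\overset{\cdot}{-}\gamma_{1}$ is an arbitrary element of $\mathcal{A}G$, not a vertical loop in the kernel, so $t_{d_{1}d_{2}}$ and $\gamma_{1}(d_{1},d_{2})$ need not be composable arrows. (Contrast this with the proof of Proposition \ref{t4.5}, where the twist $\Omega(\gamma)_{e}$ does lie in $L$ and the multiplicative picture is legitimate.) Consequently the ``uniqueness clause'' you want to invoke in $H$ has no precise statement to latch onto, and your anticipated appeal to infinitesimal commutativity (Propositions \ref{t1.1} and \ref{t1.2}) to ``move the $\varepsilon_{\nabla t}$ factor into position'' is solving a problem that the correct argument never encounters.

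The missing ingredient is Proposition 7 of \S 3.4 of Lavendhomme, which expresses the strong difference purely in terms of the fibrewise differences: $\varepsilon_{\gamma_{2}\overset{\cdot}{-}\gamma_{1}}=(\gamma_{2}\underset{1}{-}\gamma_{1})\underset{2}{-}\tau_{\gamma_{1}}^{2}$. With this identity in hand the proof is a short chain with no groupoid products to reorder: apply $\nabla$, use Corollary \ref{t3.2} twice together with the easy observation $\nabla\tau_{\gamma_{1}}^{2}=\tau_{\nabla\gamma_{1}}^{2}$, then read the same identity backwards in $H$ to recognize $\varepsilon_{\nabla\gamma_{2}\overset{\cdot}{-}\nabla\gamma_{1}}$; Proposition \ref{t3.3} converts the two ends into $(\nabla(\gamma_{2}\overset{\cdot}{-}\gamma_{1}))(d_{1}d_{2})$ and $(\nabla\gamma_{2}\overset{\cdot}{-}\nabla\gamma_{1})(d_{1}d_{2})$, and the theorem follows since $d_{1},d_{2}\in D$ are arbitrary. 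Your preliminary check that $\nabla\gamma_{1}$ and $\nabla\gamma_{2}$ agree on $D(2)$ is a reasonable concern, but it too falls out of this reduction rather than requiring the separate scale-counting argument you sketch. I would recommend rewriting the proof along these lines and dropping the multiplicative-twist route entirely.
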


\begin{proof}
Let $d_{1},d_{2}\in D$. We have
\begin{align*}
&  (\nabla(\gamma_{2}\overset{\cdot}{-}\gamma_{1}))(d_{1}d_{2})\\
&  =(\nabla\varepsilon_{\gamma_{2}\overset{\cdot}{-}\gamma_{1}})(d_{1}%
,d_{2})\text{ \ \ \ }\\
&  \text{[By Proposition \ref{t3.3}]}\\
&  =(\nabla((\gamma_{2}\underset{1}{-}\gamma_{1})\underset{2}{-}\tau
_{\gamma_{1}}^{2}))(d_{1},d_{2})\\
&  \text{[By Proposition 7 of \S 3.4 of Lavendhomme \cite{l1}]}\\
&  =((\nabla\gamma_{2}\underset{1}{-}\nabla\gamma_{1})\underset{2}{-}%
\nabla\tau_{\gamma_{1}}^{2})(d_{1},d_{2})\\
&  \text{[By Corollary \ref{t3.2}]}\\
&  =((\nabla\gamma_{2}\underset{1}{-}\nabla\gamma_{1})\underset{2}{-}%
\tau_{\nabla\gamma_{1}}^{2})(d_{1},d_{2})\\
&  =\varepsilon_{\nabla\gamma_{2}\overset{\cdot}{-}\nabla\gamma_{1}}%
(d_{1},d_{2})\\
&  \text{[By Proposition 7 of \S 3.4 of Lavendhomme \cite{l1}]}\\
&  =(\nabla\gamma_{2}\overset{\cdot}{-}\nabla\gamma_{1})(d_{1}d_{2})\\
&  \text{[By Proposition \ref{t3.3}]}%
\end{align*}
Since $d_{1},d_{2}\in D$ were arbitrary, the desired conclusion follows at once.
\end{proof}

\section{The Curvature Form}

\begin{proposition}
\label{t4.1}For any $\gamma\in\mathcal{A}^{2}G$, there exists a unique
$t\in\mathcal{A}^{1}L$ such that
\[
\iota(t_{d_{1}d_{2}})=((\nabla\gamma_{0}^{2})_{d_{1}})^{-1}((\nabla
\gamma_{d_{1}}^{1})_{d_{2}})^{-1}(\nabla\gamma_{d_{2}}^{2})_{d_{1}}%
(\nabla\gamma_{0}^{1})_{d_{2}}%
\]
for any $d_{1},d_{2}\in D$.
\end{proposition}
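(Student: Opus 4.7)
The plan is to first check that the right-hand side, call it $R(d_1,d_2)$, lies in $\iota(L)$, and then to descend the resulting map $\widetilde{f}:D^2\to L$ along the multiplication $\mu:D\times D\to D$, $(d_1,d_2)\mapsto d_1d_2$, using the microlinearity of $L$.

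First I would apply $\pi$ to $R(d_1,d_2)$. Since $\pi_{\ast}\circ\nabla=\mathrm{id}_{\mathcal{A}G}$, every factor of the form $(\nabla s)_d$ maps to $s_d$, so $\pi(R(d_1,d_2))$ becomes a product of four elements of $G$ obtained by expanding the definitions of $\gamma_0^1,\gamma_{d_1}^1,\gamma_0^2,\gamma_{d_2}^2$ in terms of $\gamma$. The resulting factors telescope: the various $\gamma(d_1,0)^{\pm1}$, $\gamma(0,d_2)^{\pm1}$, and $\gamma(d_1,d_2)^{\pm1}$ cancel pairwise, and the expression collapses to $\mathrm{id}$ in $G$. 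By exactness of $0\to L\overset{\iota}{\rightarrow}H\overset{\pi}{\rightarrow}G$, there is then a unique $\widetilde{f}(d_1,d_2)\in L$ with $\iota(\widetilde{f}(d_1,d_2))=R(d_1,d_2)$; tracking sources and targets of the four $\nabla$-factors through the noncommutative product in $H$ confirms that $\widetilde{f}(d_1,d_2)$ actually lands in the isotropy fiber $L_x$ at the common base point $x$.

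Next I would verify the two boundary conditions $\widetilde{f}(d_1,0)=\mathrm{id}_x$ and $\widetilde{f}(0,d_2)=\mathrm{id}_x$. Labelling the four factors $u=(\nabla\gamma_0^2)_{d_1}$, $v=(\nabla\gamma_{d_1}^1)_{d_2}$, $w=(\nabla\gamma_{d_2}^2)_{d_1}$, $z=(\nabla\gamma_0^1)_{d_2}$, setting $d_2=0$ turns $v$ and $z$ into identities while $w$ coincides with $u$, so $R$ reduces to $u^{-1}u=\mathrm{id}$; setting $d_1=0$ turns $u$ and $w$ into identities while $v$ coincides with $z$, so $R$ reduces to $z^{-1}z=\mathrm{id}$.

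Finally I would descend $\widetilde{f}$ along $\mu$. Regarding $\widetilde{f}$ as the curried map $d_1\mapsto\widetilde{f}(d_1,\cdot)\in L_x^D$, the vanishing $\widetilde{f}(d_1,0)=\mathrm{id}_x$ says each $\widetilde{f}(d_1,\cdot)$ is a tangent vector in $\mathcal{A}L_x$, and the vanishing $\widetilde{f}(0,d_2)=\mathrm{id}_x$ says the induced map $D\to\mathcal{A}L_x$ sends $0$ to the zero vector. Microlinearity of $L$ makes $\mathcal{A}L_x$ an $\mathbb{R}$-module satisfying the Kock--Lawvere axiom, which forces this map to have the form $d_1\mapsto d_1\cdot t$ for a unique $t\in\mathcal{A}L_x$; this is precisely the required $\widetilde{f}(d_1,d_2)=t_{d_1d_2}$, delivering both existence and uniqueness. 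The main obstacle is not conceptual but the source-target bookkeeping in the first step, needed to ensure that each of the noncommutative multiplications in $R$ is well-defined and that the end result lies in the single isotropy group $L_x$.
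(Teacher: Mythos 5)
Your argument is correct and essentially the same as the paper's: both rest on (i) the telescoping of $\pi$ applied to the four factors together with exactness of $0\rightarrow L\rightarrow H\rightarrow G$, and (ii) the vanishing of the microsquare on the two axes, which lets it factor uniquely through $(d_{1},d_{2})\mapsto d_{1}d_{2}$. The only (immaterial) difference is the order of these two steps --- the paper first factors $\eta$ through multiplication inside $H$ to get $t^{\prime}\in\mathcal{A}^{1}H$ and then descends to $L$, whereas you first land in $L$ pointwise and then factor there.
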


\begin{proof}
Let $\eta\in\mathcal{A}^{2}H$ to be
\[
\eta(d_{1},d_{2})=((\nabla\gamma_{0}^{2})_{d_{1}})^{-1}((\nabla\gamma_{d_{1}%
}^{1})_{d_{2}})^{-1}(\nabla\gamma_{d_{2}}^{2})_{d_{1}}(\nabla\gamma_{0}%
^{1})_{d_{2}}%
\]
for any $d_{1},d_{2}\in D$. Then it is easy to see that
\[
\eta(d,0)=\eta(0,d)=\mathrm{id}_{\alpha(\eta(0,0))}%
\]
Therefore there exists unique $t^{\prime}\in\mathcal{A}^{1}H$ such that
\[
t_{d_{1}d_{2}}^{\prime}=\eta(d_{1},d_{2})
\]
Furthermore we have
\begin{align*}
&  \pi(\eta(d_{1},d_{2}))\\
&  =\pi(((\nabla\gamma_{0}^{2})_{d_{1}})^{-1})\pi(((\nabla\gamma_{d_{1}}%
^{1})_{d_{2}})^{-1})\pi((\nabla\gamma_{d_{2}}^{2})_{d_{1}})\pi((\nabla
\gamma_{0}^{1})_{d_{2}})\\
&  =((\gamma_{0}^{2})_{d_{1}})^{-1}((\gamma_{d_{1}}^{1})_{d_{2}})^{-1}%
(\gamma_{d_{2}}^{2})_{d_{1}}(\gamma_{0}^{1})_{d_{2}}\\
&  =\gamma(d_{1},0)^{-1}(\gamma(d_{1},d_{2})\gamma(d_{1},0)^{-1})^{-1}%
\gamma(d_{1},d_{2})\gamma(0,d_{2})^{-1}\gamma(0,d_{2})\\
&  =\mathrm{id}_{\alpha(\eta(0,0))}%
\end{align*}
Therefore there exists a unique $t\in\mathcal{A}^{1}L$ with $\iota
(t)=t^{\prime}$. This completes the proof.
\end{proof}

We write $\Omega(\gamma)$ for the above $t$. Now we have

\begin{proposition}
\label{t4.2}The mapping $\Omega:\mathcal{A}^{2}G\rightarrow\mathcal{A}^{1}L $
consists in $\mathbf{C}^{2}(G,\mathcal{A}L)$.
\end{proposition}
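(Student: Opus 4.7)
My plan is to verify the two defining properties of $\mathbf{C}^{2}(G,\mathcal{A}L)$ separately---$\mathbb{R}$-homogeneity in each of the two $D$-slots, and alternation $\Omega(\Sigma\gamma)=-\Omega(\gamma)$---by substituting the transformed microsquare into the defining formula of Proposition~\ref{t4.1} and invoking its uniqueness clause. Concretely, an element of $\mathcal{A}^{1}L$ is pinned down by its values on products $d_{1}d_{2}$ (with the appropriate degeneracies), so in each case it will suffice to match the $d_{1}d_{2}$ values.

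For homogeneity in the first slot, the reparametrization effect is straightforward: directly from the definitions one obtains
\[
(a\underset{1}{\cdot}\gamma)_{0}^{1}=\gamma_{0}^{1},\quad (a\underset{1}{\cdot}\gamma)_{0}^{2}=a\gamma_{0}^{2},\quad (a\underset{1}{\cdot}\gamma)_{d_{1}}^{1}=\gamma_{ad_{1}}^{1},\quad (a\underset{1}{\cdot}\gamma)_{d_{2}}^{2}=a\gamma_{d_{2}}^{2}.
\]
Substituting these into Proposition~\ref{t4.1}, using that $\nabla$ is an $\mathbb{R}$-linear bundle morphism and that $(at)_{d}=t_{ad}$, the right-hand side becomes precisely the defining formula for $\Omega(\gamma)_{(ad_{1})d_{2}}$. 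Hence $\iota(\Omega(a\underset{1}{\cdot}\gamma)_{d_{1}d_{2}})=\iota((a\Omega(\gamma))_{d_{1}d_{2}})$ for all $d_{1},d_{2}\in D$, and uniqueness forces $\Omega(a\underset{1}{\cdot}\gamma)=a\Omega(\gamma)$; the slot-$2$ case is entirely symmetric.

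For alternation, the key observation is the factorization
\[
\iota(\Omega(\gamma)_{d_{1}d_{2}})=[\nabla\gamma(d_{1},d_{2})]^{-1}\cdot\nabla(\Sigma\gamma)(d_{2},d_{1}),
\]
which is immediate from the definition of $\nabla$ on microsquares together with the identities $(\Sigma\gamma)_{d_{2}}^{1}=\gamma_{d_{2}}^{2}$ and $(\Sigma\gamma)_{0}^{2}=\gamma_{0}^{1}$: the first two factors of Proposition~\ref{t4.1} package as $[\nabla\gamma(d_{1},d_{2})]^{-1}$ and the last two as $\nabla(\Sigma\gamma)(d_{2},d_{1})$. Applying the same factorization to $\Omega(\Sigma\gamma)$ evaluated at $(d_{2},d_{1})$ (using $\Sigma\Sigma\gamma=\gamma$) produces $[\nabla(\Sigma\gamma)(d_{2},d_{1})]^{-1}\cdot\nabla\gamma(d_{1},d_{2})$, so $\iota(\Omega(\gamma)_{d_{1}d_{2}})\cdot\iota(\Omega(\Sigma\gamma)_{d_{2}d_{1}})$ telescopes to the identity in $H$. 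Since $d_{1}d_{2}=d_{2}d_{1}$, injectivity of $\iota$ and Proposition~\ref{t1.2} applied fibrewise in the group bundle $L$ yield $(\Omega(\gamma)+\Omega(\Sigma\gamma))_{d_{1}d_{2}}=\mathrm{id}$ for all $d_{1},d_{2}\in D$, whence $\Omega(\Sigma\gamma)=-\Omega(\gamma)$ by uniqueness.

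The main obstacle I anticipate is recognizing this factorization: the RHS of Proposition~\ref{t4.1} is a four-term product with inverses arranged so that it reads precisely as the failure of $\nabla$ to commute with the transposition $\Sigma$, and seeing this is what converts the alternating property from a tangled commutativity manipulation into a single telescoping line. The homogeneity, by contrast, is almost bookkeeping once the identities above are noted.
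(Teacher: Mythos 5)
Your proof is correct and follows essentially the same route as the paper's: homogeneity is obtained by the same substitution $(a\underset{1}{\cdot}\gamma)_{d_{1}}^{1}=\gamma_{ad_{1}}^{1}$, $(a\underset{1}{\cdot}\gamma)_{d_{2}}^{2}=a\gamma_{d_{2}}^{2}$ into the defining formula of Proposition \ref{t4.1} followed by the uniqueness clause, and alternation by multiplying the two four-term products and letting them cancel. Your factorization $\iota(\Omega(\gamma)_{d_{1}d_{2}})=(\nabla\gamma(d_{1},d_{2}))^{-1}\,\nabla(\Sigma\gamma)(d_{2},d_{1})$ is just a tidy repackaging of that eight-factor telescoping, and is in substance the paper's later Proposition \ref{t4.5}.
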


\begin{proof}
We have to show that
\begin{align}
\Omega(a\underset{1}{\cdot}\gamma)  &  =a\Omega(\gamma)\label{3.1}\\
\Omega(a\underset{2}{\cdot}\gamma)  &  =a\Omega(\gamma)\label{3.2}\\
\Omega(\Sigma\gamma)  &  =-\Omega(\gamma) \label{3.3}%
\end{align}
for any $\gamma\in\mathcal{A}^{2}G$ and any $a\in\mathbb{R}$. Now we deal with
(\ref{3.1}), leaving a similar treatment of (\ref{3.2}) to the reader. Let
$d_{1},d_{2}\in D$. We have
\begin{align*}
&  \iota(\Omega(a\underset{1}{\cdot}\gamma))_{d_{1}d_{2}}\\
&  =((\nabla(a\underset{1}{\cdot}\gamma)_{0}^{2})_{d_{1}})^{-1}((\nabla
(a\underset{1}{\cdot}\gamma)_{d_{1}}^{1})_{d_{2}})^{-1}(\nabla(a\underset
{1}{\cdot}\gamma)_{d_{2}}^{2})_{d_{1}}(\nabla(a\underset{1}{\cdot}\gamma
)_{0}^{1})_{d_{2}}\\
&  =((\nabla\gamma_{0}^{2})_{ad_{1}})^{-1}((\nabla\gamma_{ad_{1}}^{1})_{d_{2}%
})^{-1}(\nabla\gamma_{d_{2}}^{2})_{ad_{1}}(\nabla\gamma_{0}^{1})_{d_{2}}\\
&  =\iota(\Omega(\gamma))_{ad_{1}d_{2}}\\
&  =\iota(a\Omega(\gamma))_{d_{1}d_{2}}%
\end{align*}
Now we deal with (\ref{3.3}). We have
\begin{align*}
&  \iota(\Omega(\Sigma\gamma))\iota(\Omega(\gamma))_{d_{1}d_{2}}\\
&  =\{((\nabla\gamma_{0}^{1})_{d_{2}})^{-1}((\nabla\gamma_{d_{2}}^{2})_{d_{1}%
})^{-1}(\nabla\gamma_{d_{1}}^{1})_{d_{2}}(\nabla\gamma_{0}^{2})_{d_{1}%
}\}\{((\nabla\gamma_{0}^{2})_{d_{1}})^{-1}((\nabla\gamma_{d_{1}}^{1})_{d_{2}%
})^{-1}(\nabla\gamma_{d_{2}}^{2})_{d_{1}}(\nabla\gamma_{0}^{1})_{d_{2}}\}\\
&  =\mathrm{id}_{\alpha(\gamma(0,0))}%
\end{align*}
Thie completes the proof.
\end{proof}

We call $\Omega$ the \textit{curvature form} of $\nabla$.

\begin{proposition}
\label{t4.5}For any $\gamma\in\mathcal{A}^{2}G$, we have
\[
\Omega(\gamma)=\Sigma\nabla\Sigma\gamma\overset{\cdot}{-}\nabla\gamma
\]

\end{proposition}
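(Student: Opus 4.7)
The strategy is to recognise that the formula defining $\Omega(\gamma)$ in Proposition \ref{t4.1} is simply the multiplicative strong difference of $\nabla\gamma$ and $\Sigma\nabla\Sigma\gamma$. First I would verify that this strong difference is well-defined by checking that $\nabla\gamma$ and $\Sigma\nabla\Sigma\gamma$ agree on $D(2)$. Unfolding the definition of the lifted connection gives $\nabla\gamma(d_{1},d_{2})=(\nabla\gamma_{d_{1}}^{1})_{d_{2}}(\nabla\gamma_{0}^{2})_{d_{1}}$, and the elementary identities $(\Sigma\gamma)_{e}^{1}=\gamma_{e}^{2}$, $(\Sigma\gamma)_{0}^{2}=\gamma_{0}^{1}$ (immediate from the definition of $\Sigma$) yield $\Sigma\nabla\Sigma\gamma(d_{1},d_{2})=(\nabla\gamma_{d_{2}}^{2})_{d_{1}}(\nabla\gamma_{0}^{1})_{d_{2}}$. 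Setting $d_{1}=0$ or $d_{2}=0$ in each expression shows that both reduce to $(\nabla\gamma_{0}^{1})_{d_{2}}$ and $(\nabla\gamma_{0}^{2})_{d_{1}}$ respectively, so the hypothesis for $\overset{\cdot}{-}$ is satisfied.

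Next I would regroup the four factors appearing in Proposition \ref{t4.1} to recognise the pattern:
\[
\iota(\Omega(\gamma)_{d_{1}d_{2}})=\bigl((\nabla\gamma_{d_{1}}^{1})_{d_{2}}(\nabla\gamma_{0}^{2})_{d_{1}}\bigr)^{-1}\bigl((\nabla\gamma_{d_{2}}^{2})_{d_{1}}(\nabla\gamma_{0}^{1})_{d_{2}}\bigr)=\nabla\gamma(d_{1},d_{2})^{-1}\cdot\Sigma\nabla\Sigma\gamma(d_{1},d_{2}).
\]
Thus, viewed as an $H$-valued function on $D^{2}$, the composite $\iota\circ\Omega(\gamma)$ is precisely the ``left-quotient'' of $\Sigma\nabla\Sigma\gamma$ by $\nabla\gamma$.

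Finally, I would invoke the groupoid form of Lavendhomme's characterisation of $\overset{\cdot}{-}$ (Proposition 7 of \S 3.4 of \cite{l1}, in the multiplicative version already used in the proof of Theorem \ref{t3.4}): if $\ell_{1},\ell_{2}\in\mathcal{A}^{2}H$ agree on $D(2)$, then $(d_{1},d_{2})\mapsto\ell_{1}(d_{1},d_{2})^{-1}\ell_{2}(d_{1},d_{2})$ sends $D(2)$ to the identity, hence by microlinearity factors uniquely through $d_{1}d_{2}\mapsto t(d_{1}d_{2})$ for a unique $t\in\mathcal{A}H$, and this $t$ is precisely $\ell_{2}\overset{\cdot}{-}\ell_{1}$. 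Applied with $\ell_{1}=\nabla\gamma$ and $\ell_{2}=\Sigma\nabla\Sigma\gamma$, the computation of the preceding paragraph identifies $\iota(\Omega(\gamma))$ with $\Sigma\nabla\Sigma\gamma\overset{\cdot}{-}\nabla\gamma$, which is the desired equality.

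The main obstacle is the last step: certifying the multiplicative form of the strong difference, that is, translating Lavendhomme's additive characterisation (phrased through the partial subtractions $\underset{1}{-}$, $\underset{2}{-}$ and the $\varepsilon$-construction) into the groupoid setting and verifying that the quotient $\ell_{1}^{-1}\ell_{2}$ really extracts the correct tangent vector. This is essentially the chain of identities appearing in the proof of Theorem \ref{t3.4}, read in reverse, so once that characterisation is in hand the present statement follows at once from the regrouping above.
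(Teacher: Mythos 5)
Your proposal is correct and is essentially the paper's own argument in different clothing: the paper takes the function $\mathbf{l}(d_{1},d_{2},e)=(\nabla\gamma_{d_{1}}^{1})_{d_{2}}(\nabla\gamma_{0}^{2})_{d_{1}}\Omega(\gamma)_{e}$ on $D^{2}\vee D$ and checks that its two boundary restrictions are $\nabla\gamma$ and $\Sigma\nabla\Sigma\gamma$, which is exactly your ``left-quotient'' identity $\iota(\Omega(\gamma)_{d_{1}d_{2}})=\nabla\gamma(d_{1},d_{2})^{-1}\,\Sigma\nabla\Sigma\gamma(d_{1},d_{2})$ together with the multiplicative characterisation of $\overset{\cdot}{-}$ that you flag as the remaining step. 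Your explicit verification of the $D(2)$-agreement and of $\Sigma\nabla\Sigma\gamma(d_{1},d_{2})=(\nabla\gamma_{d_{2}}^{2})_{d_{1}}(\nabla\gamma_{0}^{1})_{d_{2}}$ is correct and merely spells out what the paper calls ``easy to see.''
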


\begin{proof}
As in the proof of Proposition 8 of \S 3.4 of Lavendhomme \cite{l1}, let us
consider a function $\mathbf{l}:D^{2}\vee D\rightarrow H$ given by
\[
\mathbf{l}(d_{1},d_{2},e)=(\nabla\gamma_{d_{1}}^{1})_{d_{2}}(\nabla\gamma
_{0}^{2})_{d_{1}}\Omega(\gamma)_{e}%
\]
for any $(d_{1},d_{2},e)\in D^{2}\vee D$. Then it is easy to see that
$\mathbf{l}(d_{1},d_{2},0)=(\nabla\gamma)(d_{1},d_{2})$ and $\mathbf{l}%
(d_{1},d_{2},d_{1}d_{2})=(\Sigma\nabla\Sigma\gamma)(d_{1},d_{2})$. Therefore
we have
\begin{align*}
&  (\Sigma\nabla\Sigma\gamma\overset{\cdot}{-}\nabla\gamma)_{e}\\
&  =\mathbf{l}(0,0,e)\\
&  =\Omega(\gamma)_{e}%
\end{align*}
This completes the proof.
\end{proof}

Now we deal with tensorial aspects of $\Omega$. It is easy to see that

\begin{proposition}
\label{t4.3}Let $X,Y\in\Gamma(\mathcal{A}G)$. Then we have
\[
\nabla(Y\ast X)=\nabla Y\ast\nabla X
\]

\end{proposition}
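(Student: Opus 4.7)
The plan is to unwind both sides by evaluating pointwise over $M$, using the identification of a microsquare at $\mathrm{id}$ of $\mathfrak{B}(G)$ with a section of $\mathcal{A}^{2}G$. Fix $x\in M$ and $(d_{1},d_{2})\in D^{2}$, and set $y(d_{1}):=\beta(X_{d_{1}}(x))$. The value of the microsquare $Y\ast X$ at $x$ is then the element $\gamma^{x}\in\mathcal{A}_{x}^{2}G$ given by
\[
\gamma^{x}(d_{1},d_{2})=Y_{d_{2}}(y(d_{1}))\,X_{d_{1}}(x).
\]

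The first step is to compute the two canonical slices appearing in the definition of the lifted $\nabla$. A direct calculation using the groupoid identities in the fibres of $G$ gives $(\gamma^{x})_{0}^{2}(d_{1})=X_{d_{1}}(x)$ and $(\gamma^{x})_{d_{1}}^{1}(d_{2})=Y_{d_{2}}(y(d_{1}))$, so $(\gamma^{x})_{0}^{2}$ is the value of the section $X$ at $x$ inside $\mathcal{A}_{x}G$, while $(\gamma^{x})_{d_{1}}^{1}$ is the value of the section $Y$ at the moving base point $y(d_{1})$ inside $\mathcal{A}_{y(d_{1})}G$.

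The second step is to apply $\nabla$. Since $\nabla:\mathcal{A}G\rightarrow\mathcal{A}H$ is a vector bundle morphism over $M$ and hence commutes with restriction to fibres, plugging the slices above into the definition of the lifted $\nabla$ on microsquares yields
\[
(\nabla\gamma^{x})(d_{1},d_{2})=(\nabla Y)_{d_{2}}(y(d_{1}))\,(\nabla X)_{d_{1}}(x).
\]

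The third and final step is to compute the right-hand side of the target identity directly. Because $\pi\circ\nabla=\mathrm{id}$ and $\pi$ preserves source and target, $(\nabla X)_{d_{1}}(x)$ has the same target $y(d_{1})$ as $X_{d_{1}}(x)$, so the definition of $\ast$ for bisections of $H$ gives
\[
(\nabla Y\ast\nabla X)(d_{1},d_{2})(x)=(\nabla Y)_{d_{2}}(y(d_{1}))\,(\nabla X)_{d_{1}}(x),
\]
matching the previous display. Since $x$, $d_{1}$, $d_{2}$ were arbitrary, the identity follows. The main obstacle is nothing more than careful bookkeeping of sources and targets; the essential observation is that the identity $\pi\circ\nabla=\mathrm{id}$ ensures the target shift caused by $X$ is preserved by $\nabla$, and this is exactly what makes the two formulas line up.
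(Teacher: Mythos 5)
Your proof is correct, and since the paper states this proposition without proof (prefacing it only with ``It is easy to see that''), your pointwise computation of the slices $(\gamma^{x})_{0}^{2}=X(x)$ and $(\gamma^{x})_{d_{1}}^{1}=Y(y(d_{1}))$, followed by the observation that $\pi_{\ast}\circ\nabla=\mathrm{id}$ forces $\beta((\nabla X)_{d_{1}}(x))=\beta(X_{d_{1}}(x))$, is exactly the direct verification the author leaves to the reader. No gaps.
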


Now we have the following familiar form for $\Omega$.

\begin{theorem}
\label{t4.4}Let $X,Y\in\Gamma(\mathcal{A}G)$. Then we have
\[
\Omega(Y\ast X)=\nabla\lbrack X,Y]-[\nabla X,\nabla Y]
\]

\end{theorem}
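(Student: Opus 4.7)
The plan is to apply Proposition \ref{t4.5} to express $\Omega(Y \ast X)$ as a strong difference, and then invoke the cocycle identity (Proposition \ref{t1.5}) with an intermediate microsquare, splitting the curvature into a $\nabla[X,Y]$ piece and a $-[\nabla X, \nabla Y]$ piece.

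The combinatorial heart is the identity
\[
\Sigma(Y \ast X) \overset{\cdot}{-} (X \ast Y) = [X, Y].
\]
I would verify this directly: the two microsquares agree on $D(2)$ (both equal $Y_{d_1}$ when $d_2 = 0$ and $X_{d_2}$ when $d_1 = 0$), and a pointwise calculation in $\mathfrak{B}(G)$, using $(\sigma \ast \rho)^{-1} = \rho^{-1} \ast \sigma^{-1}$, gives
\[
(X \ast Y)(d_1, d_2)^{-1} \cdot \Sigma(Y \ast X)(d_1, d_2) = Y_{-d_1} \ast X_{-d_2} \ast Y_{d_1} \ast X_{d_2},
\]
which by Proposition \ref{t1.3} equals $[X, Y]_{d_2 d_1} = [X, Y]_{d_1 d_2}$. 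Applying $\nabla$ via Theorem \ref{t3.4} and Proposition \ref{t4.3}, then applying $\Sigma$ (which preserves strong differences, since $d_1 d_2 = d_2 d_1$), yields $\Sigma \nabla \Sigma(Y \ast X) \overset{\cdot}{-} \Sigma(\nabla X \ast \nabla Y) = \nabla [X, Y]$. The same combinatorial identity applied in $\mathfrak{B}(H)$, together with antisymmetry of the bracket, gives $\Sigma(\nabla X \ast \nabla Y) \overset{\cdot}{-} (\nabla Y \ast \nabla X) = -[\nabla X, \nabla Y]$ (equivalently, by direct computation, the left-hand side evaluates pointwise to $(\nabla X)_{-d_1}(\nabla Y)_{-d_2}(\nabla X)_{d_1}(\nabla Y)_{d_2} = ([\nabla X, \nabla Y]_{d_1 d_2})^{-1}$).

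Setting $A = \Sigma \nabla \Sigma(Y \ast X)$, $B = \Sigma(\nabla X \ast \nabla Y)$, and $C = \nabla(Y \ast X) = \nabla Y \ast \nabla X$ (the last equality by Proposition \ref{t4.3}), a check along the axes shows all three microsquares agree on $D(2)$, with common values $(\nabla X)_{d_1}$ when $d_2 = 0$ and $(\nabla Y)_{d_2}$ when $d_1 = 0$. Proposition \ref{t4.5} identifies $A \overset{\cdot}{-} C = \Omega(Y \ast X)$, and the two displayed identities give $A \overset{\cdot}{-} B = \nabla[X, Y]$ and $B \overset{\cdot}{-} C = -[\nabla X, \nabla Y]$. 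Applying Proposition \ref{t1.5} to $A, B, C$ and rearranging yields
\[
\Omega(Y \ast X) = (A \overset{\cdot}{-} B) + (B \overset{\cdot}{-} C) = \nabla [X, Y] - [\nabla X, \nabla Y],
\]
as desired. The main obstacle is the bookkeeping in the two preliminary strong-difference identities, both of which reduce to careful application of Proposition \ref{t1.3} in the group of bisections; everything else is a formal manipulation via results already developed.
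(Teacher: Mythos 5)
Your proof is correct and follows essentially the same route as the paper's: both start from Proposition \ref{t4.5}, split the resulting strong difference via the cocycle identity of Proposition \ref{t1.5}, and identify the two pieces as $\nabla[X,Y]$ and $-[\nabla X,\nabla Y]$ using Theorem \ref{t3.4}, Proposition \ref{t4.3}, and the strong-difference characterization of the Lie bracket. The only cosmetic differences are your choice of intermediate microsquare $\Sigma(\nabla X\ast\nabla Y)$ (the paper instead shuffles $\Sigma$'s using Proposition 6 of \S 3.4 of Lavendhomme and uses $\nabla(X\ast Y)$) and your direct re-derivation of the bracket identity from Proposition \ref{t1.3}, where the paper cites Proposition 8 of \S 3.4 of Lavendhomme.
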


\begin{proof}
It suffices to note that
\begin{align*}
&  \Omega(Y\ast X)\\
&  =\Sigma\nabla\Sigma(Y\ast X)\overset{\cdot}{-}\nabla(Y\ast X)\\
&  \text{[By Proposition \ref{t4.5}]}\\
&  =\nabla\Sigma(Y\ast X)\overset{\cdot}{-}\Sigma\nabla(Y\ast X)\\
&  \text{[By Proposition 6 of \S 3.4 of Lavendhomme \cite{l1}]}\\
&  =\nabla(\Sigma(Y\ast X)\overset{\cdot}{-}X\ast Y)-(\Sigma\nabla(Y\ast
X)\overset{\cdot}{-}\nabla(X\ast Y))\\
&  \text{[By Proposition \ref{t1.5}]}\\
&  =\nabla(Y\ast X\overset{\cdot}{-}\Sigma(X\ast Y))-(\nabla(Y\ast
X)\overset{\cdot}{-}\Sigma\nabla(X\ast Y))\\
&  \text{[By Proposition 6 of \S 3.4 of Lavendhomme \cite{l1}]}\\
&  =\nabla(Y\ast X\overset{\cdot}{-}\Sigma(X\ast Y))-(\nabla Y\ast\nabla
X\overset{\cdot}{-}\Sigma(\nabla X\ast\nabla Y))\\
&  \text{[By Proposition \ref{t4.3}]}\\
&  =\nabla\lbrack X,Y]-[\nabla X,\nabla Y]\\
&  \text{[By Proposition 8 of \S 3.4 of Lavendhomme \cite{l1}]}%
\end{align*}

\end{proof}

\section{The Bianchi Identity}

Let us begin with the following abstract Bianchi identity, which traces back
to Kock \cite{k4}, though our version is cubical, while Kock's one is simplicial.

\begin{theorem}
Let $\gamma\in\mathcal{A}^{2}G$. Let $d_{1},d_{2},d_{3}\in D$. We denote
points $\beta(\gamma(0,0,0))$, $\beta(\gamma(d_{1},0,0))$, $\beta
(\gamma(0,d_{2},0))$, $\beta(\gamma(0,0,d_{3}))$, $\beta(\gamma(d_{1}%
,d_{2},0))$, $\beta(\gamma(d_{1},0,d_{3}))$, $\beta(\gamma(0,d_{2},d_{3}))$
and $\beta(\gamma(d_{1},d_{2},d_{3}))$ by $O$, $A$, $B$, $C$, $D$, $E$, $F$
and $G$ respectively. These eight points are depicted figuratively as the
eight vertices of a cube:
\end{theorem}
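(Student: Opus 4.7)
The statement being set up is the cubical (abstract) Bianchi identity: for a $3$-microcube $\gamma\in\mathcal{A}^{3}G$ (the ``$\mathcal{A}^{2}G$'' in the statement appears to be a typo, since $\gamma$ is evaluated at three arguments), the six two-dimensional faces $\gamma^{i,j}_{e_{1},e_{2}}$ obtained by fixing one coordinate to $0$ or to the corresponding $d_{k}$ each have a curvature in $\mathcal{A}^{1}L$, and when these six curvatures are conjugated by the lifted edge-transports $(\nabla\gamma_{k})_{d_{k}}$ to a common basepoint (say $O$) and multiplied around the boundary of the cube in the appropriate cyclic order, the product equals $\mathrm{id}_{O}$. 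This is the synthetic analogue of $\partial\circ\partial=0$.

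The plan is to unpack the six face-curvatures using Proposition \ref{t4.1}. Each face-curvature contributes a product of four $\nabla\gamma$-terms of the shape $((\nabla\gamma^{?}_{?})_{?})^{\pm 1}$, decorated with sign conventions that track whether the face sits at the ``$0$-end'' or the ``$d_{k}$-end'' of the missing coordinate. Thus the left-hand side of the identity expands into a product of $24$ elementary $\nabla\gamma$-factors sitting at the various vertices $O,A,\ldots,G$ of the cube, further interlaced with the six edge-transports $(\nabla\gamma_{k})_{\pm d_{k}}$ used to conjugate the face-curvatures back to $O$. I would then carry out the verification edge by edge: for each of the twelve oriented edges of the cube, the two adjacent face-curvatures that share that edge contribute matching $\nabla\gamma$-factors with opposite exponents, and after using the conjugating transports to move both occurrences to the same vertex they cancel.

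The driving observation, which is the whole moral of the paper's introduction, is that all the relevant $\nabla\gamma$-factors at a given vertex carry coordinates of the form $d_{k_{1}}d_{k_{2}}$, i.e.\ live in $\mathcal{A}H$ with an infinitesimal scalar applied; by Propositions \ref{t1.1} and \ref{t1.2} any two such factors at the same vertex commute. This commutativity is precisely what allows the twelve pairs of matching $\nabla\gamma$-factors to be brought adjacent and annihilated, despite the groupoid being noncommutative in general. After the cancellations, what remains is $\mathrm{id}_{O}$, proving the identity.

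The main obstacle is purely bookkeeping: one must write down the six faces in a consistent orientation convention, expand Proposition \ref{t4.1} for each, and chase the $24$ factors across the cube keeping careful track of which basepoint each factor sits at, since the strong-difference-style cancellations only make sense after transporting to a common fiber. Once a consistent notation (analogous to the $\gamma^{i,j}_{e_{1},e_{2}}$ introduced in \S 2.2) is fixed and the commutativity on the infinitesimal level is invoked, the algebraic collapse to $\mathrm{id}_{O}$ is essentially automatic, just as Kock's simplicial argument in \cite{k4} is essentially automatic.
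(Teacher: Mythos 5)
The identity you are being asked to prove (the displayed word in $P$'s and $R$'s that completes this theorem) is established in the paper by pure cancellation: substitute $R_{XYZW}=P_{WX}P_{ZW}P_{YZ}P_{XY}$ everywhere and strike out every consecutive pair $P_{XY}P_{YX}$; the word then collapses to $\mathrm{id}_{O}$ in the free groupoid on the edges of the cube. No curvature forms, no infinitesimal commutativity, and no rearrangement of factors are needed --- the six faces and the three conjugating edge-paths are listed in precisely the order that makes every cancellation occur between \emph{adjacent} letters. Your proposal instead routes the argument through Proposition \ref{t4.1} and through a claimed commutativity of the edge factors, and that is where it goes wrong.

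Concretely, you assert that ``all the relevant $\nabla\gamma$-factors at a given vertex carry coordinates of the form $d_{k_{1}}d_{k_{2}}$'' and hence commute by Propositions \ref{t1.1} and \ref{t1.2}. But the twenty-four factors produced by expanding the six $R$'s are the edge transports $(\nabla\gamma_{e_{1},e_{2}}^{i,j})_{d_{k}}$, each evaluated at a \emph{single} first-order $d_{k}$; two such transports in different coordinate directions at the same vertex do not commute in general --- their failure to commute is exactly what $\Omega$ measures in Proposition \ref{t4.1}. The elements that do commute by the corollary to Proposition \ref{t1.2} are the second-order curvature values $\Omega(\cdot)_{d_{i}d_{j}}$ (conjugated to a common vertex), and that commutativity is the engine of the \emph{next} theorem, $\mathbf{d}_{\nabla}\Omega=0$, not of this one. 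So your cancellation scheme, which needs to move non-adjacent edge factors past one another, rests on a false premise; the statement is saved only because, in the ordering actually written down, no such moves are required. You have in effect folded the abstract Bianchi identity and its classical consequence into a single step, and in doing so attributed to the former the commutativity that only the latter uses --- and misstated that commutativity besides.
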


$%
\begin{array}
[c]{ccc}
& \ \ \ \
\begin{array}
[c]{c}%
C
\end{array}
\underline
{\ \ \ \ \ \ \ \ \ \ \ \ \ \ \ \ \ \ \ \ \ \ \ \ \ \ \ \ \ \ \ \ \ \ \ \ \ \ \ \ \ \ \ \ \ \ \ \ \ \ }%
\begin{array}
[c]{c}
\end{array}
& F\\
&
\begin{array}
[c]{c}%
\swarrow
\end{array}
\downarrow
\ \ \ \ \ \ \ \ \ \ \ \ \ \ \ \ \ \ \ \ \ \ \ \ \ \ \ \ \ \ \ \ \ \ \ \ \ \ \ \ \ \ \ \ \ \ \ \
\begin{array}
[c]{c}%
\swarrow
\end{array}
& \downarrow\\
E &
\begin{array}
[c]{c}
\end{array}
\underline
{\ \ \ \ \ \ \ \ \ \ \ \ \ \ \ \ \ \ \ \ \ \ \ \ \ \ \ \ \ \ \ \ \ \ \ \ \ \ \ \ \ \ \ \ \ \ \ \ \ \ }%
\begin{array}
[c]{c}%
G
\end{array}
\ \ \ \  &
\end{array}
$

$%
\begin{array}
[c]{ccc}
& \ \ \ \
\begin{array}
[c]{c}%
O
\end{array}
\underline
{\ \ \ \ \ \ \ \ \ \ \ \ \ \ \ \ \ \ \ \ \ \ \ \ \ \ \ \ \ \ \ \ \ \ \ \ \ \ \ \ \ \ \ \ \ \ \ \ \ \ }%
\begin{array}
[c]{c}
\end{array}
& B\\
\downarrow &
\begin{array}
[c]{c}%
\swarrow
\end{array}
\ \ \ \ \ \ \ \ \ \ \ \ \ \ \ \ \ \ \ \ \ \ \ \ \ \ \ \ \ \ \ \ \ \ \ \ \ \ \ \ \ \ \ \ \ \ \ \ \ \downarrow%
\begin{array}
[c]{c}%
\swarrow
\end{array}
& \\
A &
\begin{array}
[c]{c}
\end{array}
\underline
{\ \ \ \ \ \ \ \ \ \ \ \ \ \ \ \ \ \ \ \ \ \ \ \ \ \ \ \ \ \ \ \ \ \ \ \ \ \ \ \ \ \ \ \ \ \ \ \ \ \ }%
\begin{array}
[c]{c}%
D
\end{array}
\ \ \ \  &
\end{array}
$

\begin{theorem}
For each pair $(X,Y)$ of adjacent vertices $X,Y$ of the cube, $P_{XY}$ denotes
the following arrow in $H$, while $P_{YX}$ denotes the inverse of $P_{XY}$:
\begin{align*}
P_{OA} &  =(\nabla\gamma_{0,0}^{2,3})_{d_{1}}\\
P_{OB} &  =(\nabla\gamma_{0,0}^{1,3})_{d_{2}}\\
P_{OC} &  =(\nabla\gamma_{0,0}^{1,2})_{d_{3}}\\
P_{AD} &  =(\nabla\gamma_{d_{1},0}^{1,3})_{d_{2}}\\
P_{AE} &  =(\nabla\gamma_{d_{1},0}^{1,2})_{d_{3}}\\
P_{BD} &  =(\nabla\gamma_{d_{2},0}^{2,3})_{d_{1}}\\
P_{BF} &  =(\nabla\gamma_{0,d_{2}}^{1,2})_{d_{3}}\\
P_{CE} &  =(\nabla\gamma_{0,d_{3}}^{2,3})_{d_{1}}\\
P_{CF} &  =(\nabla\gamma_{0,d_{3}}^{1,3})_{d_{2}}\\
P_{DG} &  =(\nabla\gamma_{d_{1},d_{2}}^{1,2})_{d_{3}}\\
P_{EG} &  =(\nabla\gamma_{d_{2},d_{3}}^{2,3})_{d_{1}}\\
P_{FG} &  =(\nabla\gamma_{d_{1},d_{3}}^{1,3})_{d_{2}}%
\end{align*}
For any four vertices $X,Y,Z,W$ of the cube rounding one of the six facial
squares of the cube, $R_{XYZW}$ denotes $P_{WX}P_{ZW}P_{YZ}P_{XY}$. Then we
have
\begin{align*}
&  P_{AO}P_{DA}P_{GD}R_{GFBD}R_{GECF}R_{GDAE}P_{DG}P_{AD}P_{OA}R_{OCEA}%
R_{OBFC}R_{OADB}\\
&  =\mathrm{id}_{O}%
\end{align*}

\end{theorem}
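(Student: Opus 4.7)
My plan is to prove this identity by formal cancellation. After expanding each $R_{XYZW}$ using the definition $R_{XYZW}=P_{WX}P_{ZW}P_{YZ}P_{XY}$, the left hand side becomes a word of length thirty in the twelve arrows $P_{XY}$. The argument will then use nothing beyond the inversion relation $P_{YX}=P_{XY}^{-1}$; this is the sense in which the identity is ``abstract'', and in particular neither the differential structure of $\nabla$ nor the specific values of the $P_{XY}$ play any role.

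In a first wave, the seven junctions between consecutive factors of the big product all produce adjacent inverse pairs: $P_{GD}P_{DG}$ where the leftmost connector $P_{AO}P_{DA}P_{GD}$ meets $R_{GFBD}$, then $P_{GF}P_{FG}$, then $P_{GE}P_{EG}$, then another $P_{GD}P_{DG}$ where the block of three $R$-factors at $G$ meets the middle connector $P_{DG}P_{AD}P_{OA}$, and finally $P_{OA}P_{AO}$, $P_{OC}P_{CO}$, $P_{OB}P_{BO}$ inside the block at $O$. Seven cancellations leave sixteen letters, and the middle connector has been reduced to just its central letter $P_{AD}$.

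In a second wave, the surviving word collapses by a clean zipper cascade from the centre outward. The exposed pair $P_{DA}P_{AD}$ cancels first, then $P_{AE}P_{EA}$, then $P_{EC}P_{CE}$, then $P_{CF}P_{FC}$, then $P_{FB}P_{BF}$, then $P_{BD}P_{DB}$, then $P_{DA}P_{AD}$, and finally $P_{AO}P_{OA}=\mathrm{id}_O$. Eight more adjacent cancellations exhaust the sixteen letters, giving the claimed identity.

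The main obstacle is bookkeeping rather than insight: one has to organise the thirty factors and verify that each cancellation exposes the next inverse pair. Conceptually, each of the twelve edges of the cube appears in exactly two of the six face-loops with opposite orientations, and the three edges $OA$, $AD$, $DG$ making up the connector path from $O$ to $G$ each appear two further times, again with opposite orientations. The cyclic ordering imposed by the six $R$-factors and the two connector triples is precisely the one in which successive adjacent cancellations exhaust all thirty letters; this combinatorial arrangement is the content of Kock's abstract Bianchi identity.
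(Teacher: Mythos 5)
Your proposal is correct and is exactly the paper's own argument: the paper's proof reads, in full, ``Write over the desired identity exclusively in terms of $P_{XY}$'s, and write off all consecutive $P_{XY}P_{YX}$'s,'' and your two waves of cancellations (seven at the junctions between the blocks, then the eight-step zipper collapsing the remaining sixteen letters down to $P_{AO}P_{OA}=\mathrm{id}_O$) carry out that free cancellation explicitly and correctly. No further comment is needed.
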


\begin{proof}
Write over the desired identity exclusively in terms of $P_{XY}$'s, and write
off all consective $P_{XY}P_{YX}$'s.
\end{proof}

Now we are ready to establish the second Bianchi identity in familiar form.

\begin{theorem}
We have
\[
\mathbf{d}_{\nabla}\Omega=0
\]

\end{theorem}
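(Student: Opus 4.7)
The plan is to reduce this familiar-form Bianchi identity to the abstract (combinatorial) Bianchi identity established just above. Fix $\gamma\in\mathcal{A}^{3}G$ and $(d_{1},d_{2},d_{3})\in D^{3}$, and adopt the cube labelling $O,A,B,C,D,E,F,G$ together with the arrows $P_{XY}$ and facial curvatures $R_{XYZW}$ from that theorem. Since $\iota:L\rightarrow H$ is a componentwise injection, it suffices to verify
\[
\iota\bigl(\bigl((\mathbf{d}_{\nabla}\Omega)(\gamma)\bigr)_{d_{1}d_{2}d_{3}}\bigr)=\mathrm{id}_{O}
\]
in $H$ for all $d_{1},d_{2},d_{3}\in D$; then $\mathbf{d}_{\nabla}\Omega=0$ follows because $\gamma$ and the $d_{i}$'s are arbitrary.

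The first step is to unwind each factor of the product defining $\mathbf{d}_{\nabla}\Omega$. Applying Proposition \ref{t4.1} to $\gamma_{0}^{i}$ and $\gamma_{d_{i}}^{i}$ and carefully matching notation against the definitions of $\gamma_{e}^{i}$ and $\gamma_{e_{1},e_{2}}^{i,j}$, one sees that $\iota(\Omega(\gamma_{0}^{i})_{\ldots})$ is precisely the facial curvature around the face of the cube through $O$ perpendicular to the $i$-th axis, while $\iota(\Omega(\gamma_{d_{i}}^{i})_{\ldots})$, evaluated at the negated infinitesimal product, is (up to orientation) the curvature around the opposite face through $G$. Meanwhile $(\nabla\gamma_{i})_{d_{i}}$ is by definition one of the three edges $P_{OA}$, $P_{OB}$, $P_{OC}$, so the sandwich $(\nabla\gamma_{i})_{d_{i}}^{-1}(\cdot)(\nabla\gamma_{i})_{d_{i}}$ transports the opposite-face curvature back along that edge. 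The exponents $(-1)^{i}$ alternate the orientations of the six face contributions in the correct pattern.

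Having identified all six facial $R_{XYZW}$'s and the three conjugating edges, I would invoke the commutativity results of Section 2.1, especially Proposition \ref{t1.2} and the remark following it (asserting that $(t_{1})_{d_{1}d_{2}}$ and $(t_{2})_{d_{1}d_{3}}$ commute), to freely rearrange the nested conjugations. The edges telescope into the long transports $P_{AO}P_{DA}P_{GD}$ and $P_{DG}P_{AD}P_{OA}$ along the cube's diagonal $O\rightarrow A\rightarrow D\rightarrow G$ and its reverse, while the six $R$'s fall into the order prescribed by the abstract Bianchi identity. By that theorem the product equals $\mathrm{id}_{O}$, completing the proof.

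The main obstacle I anticipate is the combinatorial bookkeeping of the first step: correctly matching the subscripts $-d_{1}\ldots\widehat{d_{i}}\ldots d_{n+1}$ in the $\mathbf{d}_{\nabla}$ formula to the traversal direction around each opposite face, and verifying that the successive conjugations by the three edges telescope along the diagonal using only infinitesimal commutativity and no deeper identity. Once this is patiently carried out, the abstract Bianchi theorem does all the conceptual work, precisely as the introduction anticipates.
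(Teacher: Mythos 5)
Your proposal is correct and follows essentially the same route as the paper: identify the six factors of the $\mathbf{d}_{\nabla}\Omega$ product with the facial curvatures $R_{XYZW}$ of the cube (via Proposition \ref{t4.1}), use the infinitesimal commutativity of Proposition \ref{t1.2} to pass between single-edge conjugations and the diagonal transports $P_{AO}P_{DA}P_{GD}$, and conclude from the abstract Bianchi identity. The paper carries out exactly the bookkeeping you defer (reading the same computation in the opposite direction, reducing the long transports to single edges rather than expanding them), so the only content you leave unverified is detail, not idea.
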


\begin{proof}
Let $\gamma,d_{1},d_{2},d_{3},O,A,B,C,D,E,F,G$ be the same as in the previous
theorem. By the very definition of $\Omega$, we have
\[
R_{OADB}=\Omega(\gamma_{0}^{3})_{-d_{1}d_{2}}%
\]%
\[
R_{OBFC}=\Omega(\gamma_{0}^{1})_{-d_{2}d_{3}}%
\]%
\[
R_{OCEA}=\Omega(\gamma_{0}^{2})_{d_{1}d_{3}}%
\]
Now we have the following three calculations:
\begin{align*}
&  P_{AO}P_{DA}P_{GD}R_{GDAE}P_{DG}P_{AD}P_{OA}\\
&  =P_{AO}R_{AEGD}P_{OA}\\
&  =((\nabla\gamma_{0,0}^{2,3})_{d_{1}})^{-1}\Omega(\gamma_{d_{1}}^{1}%
)_{d_{2}d_{3}}(\nabla\gamma_{0,0}^{2,3})_{d_{1}}%
\end{align*}%
\begin{align*}
&  P_{AO}P_{DA}P_{GD}R_{GECF}P_{DG}P_{AD}P_{OA}\\
&  =P_{AO}P_{DA}P_{GD}P_{EG}P_{CE}R_{CFGE}P_{EC}P_{GE}P_{DG}P_{AD}P_{OA}\\
&  =P_{AO}R_{AEGD}P_{EA}P_{CE}R_{CFGE}P_{EC}P_{AE}R_{ADGE}P_{OA}\\
&  =R_{OCEA}P_{CO}P_{EC}P_{AE}R_{AEGD}P_{EA}P_{CE}R_{CFGE}P_{EC}P_{AE}%
R_{ADGE}P_{EA}P_{CE}P_{OC}R_{OAEC}\\
&  =\Omega(\gamma_{0}^{2})_{d_{1}d_{3}}((\nabla\gamma_{0,0}^{1,2})_{d_{3}%
})^{-1}\{((\nabla\gamma_{0,d_{3}}^{2,3})_{d_{1}})^{-1}(\nabla\gamma_{d_{1}%
,0}^{1,2})_{d_{3}}\Omega(\gamma_{d_{1}}^{1})_{d_{2}d_{3}}((\nabla\gamma
_{d_{1},0}^{1,2})_{d_{3}})^{-1}(\nabla\gamma_{0,d_{3}}^{2,3})_{d_{1}}\}\\
&  \Omega(\gamma_{d_{3}}^{3})_{d_{1}d_{2}}\{((\nabla\gamma_{0,d_{3}}%
^{2,3})_{d_{1}})^{-1}(\nabla\gamma_{d_{1},0}^{1,2})_{d_{3}}\Omega
(\gamma_{d_{1}}^{1})_{-d_{2}d_{3}}((\nabla\gamma_{d_{1},0}^{1,2})_{d_{3}%
})^{-1}(\nabla\gamma_{0,d_{3}}^{2,3})_{d_{1}}\}\\
&  (\nabla\gamma_{0,0}^{1,2})_{d_{3}}\Omega(\gamma_{0}^{2})_{-d_{1}d_{3}}\\
&  =\Omega(\gamma_{0}^{2})_{d_{1}d_{3}}\{((\nabla\gamma_{0,0}^{1,2})_{d_{3}%
})^{-1}\Omega(\gamma_{d_{3}}^{3})_{d_{1}d_{2}}(\nabla\gamma_{0,0}%
^{1,2})_{d_{3}}\}\Omega(\gamma_{0}^{2})_{-d_{1}d_{3}}\\
&  \text{[By Proposition \ref{t1.2}, cf. Figure (\ref{Figure 1})]}\\
&  =((\nabla\gamma_{0,0}^{1,2})_{d_{3}})^{-1}\Omega(\gamma_{d_{3}}^{3}%
)_{d_{1}d_{2}}(\nabla\gamma_{0,0}^{1,2})_{d_{3}}\\
&  \text{[By Proposition \ref{t1.2}, cf. Figure (\ref{Figure 2})]}%
\end{align*}%
\begin{equation}%
\begin{array}
[c]{ccc}%
C & \underrightarrow{((\nabla\gamma_{0,d_{3}}^{2,3})_{d_{1}})^{-1}%
(\nabla\gamma_{d_{1},0}^{1,2})_{d_{3}}\Omega(\gamma_{d_{1}}^{1})_{-d_{2}d_{3}%
}((\nabla\gamma_{d_{1},0}^{1,2})_{d_{3}})^{-1}(\nabla\gamma_{0,d_{3}}%
^{2,3})_{d_{1}}} & C\\%
\begin{array}
[c]{cc}%
\Omega(\gamma_{d_{3}}^{3})_{d_{1}d_{2}} & \downarrow
\end{array}
& \circlearrowleft &
\begin{array}
[c]{cc}%
\downarrow & \Omega(\gamma_{d_{3}}^{3})_{d_{1}d_{2}}%
\end{array}
\\
C & \overrightarrow{((\nabla\gamma_{0,d_{3}}^{2,3})_{d_{1}})^{-1}(\nabla
\gamma_{d_{1},0}^{1,2})_{d_{3}}\Omega(\gamma_{d_{1}}^{1})_{-d_{2}d_{3}%
}((\nabla\gamma_{d_{1},0}^{1,2})_{d_{3}})^{-1}(\nabla\gamma_{0,d_{3}}%
^{2,3})_{d_{1}}} & C
\end{array}
\label{Figure 1}%
\end{equation}%
\begin{equation}%
\begin{array}
[c]{ccc}%
O & \underrightarrow{((\nabla\gamma_{0,0}^{1,2})_{d_{3}})^{-1}\Omega
(\gamma_{d_{3}}^{3})_{d_{1}d_{2}}(\nabla\gamma_{0,0}^{1,2})_{d_{3}}} & O\\%
\begin{array}
[c]{cc}%
\Omega(\gamma_{0}^{2})_{-d_{1}d_{3}} & \downarrow
\end{array}
& \circlearrowleft &
\begin{array}
[c]{cc}%
\downarrow & \Omega(\gamma_{0}^{2})_{-d_{1}d_{3}}%
\end{array}
\\
O & \overrightarrow{((\nabla\gamma_{0,0}^{1,2})_{d_{3}})^{-1}\Omega
(\gamma_{d_{3}}^{3})_{d_{1}d_{2}}(\nabla\gamma_{0,0}^{1,2})_{d_{3}}} & O
\end{array}
\label{Figure 2}%
\end{equation}%
\begin{align*}
&  P_{AO}P_{DA}P_{GD}R_{GFBD}P_{DG}P_{AD}P_{OA}\\
&  =P_{AO}P_{DA}R_{DGFB}P_{AD}P_{OA}\\
&  =R_{OBDA}P_{BO}R_{BDGF}P_{OB}R_{OADB}\\
&  =\Omega(\gamma_{0}^{3})_{d_{1}d_{2}}\{((\nabla\gamma_{0,0}^{1,3})_{d_{2}%
})^{-1}\Omega(\gamma_{d_{2}}^{2})_{-d_{1}d_{3}}(\nabla\gamma_{0,0}%
^{1,3})_{d_{2}}\}\Omega(\gamma_{0}^{3})_{-d_{1}d_{2}}\\
&  =((\nabla\gamma_{0,0}^{1,3})_{d_{2}})^{-1}\Omega(\gamma_{d_{2}}%
^{2})_{-d_{1}d_{3}}(\nabla\gamma_{0,0}^{1,3})_{d_{2}}\\
&  \text{[By Proposition \ref{t1.2}, cf. Figure (\ref{Figure 3})]}%
\end{align*}%
\begin{equation}%
\begin{array}
[c]{ccc}%
O & \underrightarrow{((\nabla\gamma_{0,0}^{1,3})_{d_{2}})^{-1}\Omega
(\gamma_{d_{2}}^{2})_{-d_{1}d_{3}}(\nabla\gamma_{0,0}^{1,3})_{d_{2}}} & O\\%
\begin{array}
[c]{cc}%
\Omega(\gamma_{0}^{3})_{-d_{1}d_{2}} & \downarrow
\end{array}
& \circlearrowleft &
\begin{array}
[c]{cc}%
\downarrow & \Omega(\gamma_{0}^{3})_{-d_{1}d_{2}}%
\end{array}
\\
O & \overrightarrow{((\nabla\gamma_{0,0}^{1,3})_{d_{2}})^{-1}\Omega
(\gamma_{d_{2}}^{2})_{-d_{1}d_{3}}(\nabla\gamma_{0,0}^{1,3})_{d_{2}}} & O
\end{array}
\label{Figure 3}%
\end{equation}
Therefore we have
\begin{align*}
&  (\mathbf{d}_{\nabla}\Omega(\gamma))_{-d_{1}d_{2}d_{3}}\\
&  =\Omega(\gamma_{0}^{1})_{-d_{2}d_{3}}\{((\nabla\gamma_{0,0}^{2,3})_{d_{1}%
})^{-1}\Omega(\gamma_{d_{1}}^{1})_{d_{2}d_{3}}(\nabla\gamma_{0,0}%
^{2,3})_{d_{1}}\}\\
&  \Omega(\gamma_{0}^{2})_{d_{1}d_{3}}\{((\nabla\gamma_{0,0}^{1,3})_{d_{2}%
})^{-1}\Omega(\gamma_{d_{2}}^{2})_{-d_{1}d_{3}}(\nabla\gamma_{0,0}%
^{1,3})_{d_{2}}\}\\
&  \Omega(\gamma_{0}^{3})_{-d_{1}d_{2}}\{((\nabla\gamma_{0,0}^{1,2})_{d_{3}%
})^{-1}\Omega(\gamma_{d_{3}}^{3})_{d_{1}d_{2}}(\nabla\gamma_{0,0}%
^{1,2})_{d_{3}}\}\\
&  =\{((\nabla\gamma_{0,0}^{2,3})_{d_{1}})^{-1}\Omega(\gamma_{d_{1}}%
^{1})_{d_{2}d_{3}}(\nabla\gamma_{0,0}^{2,3})_{d_{1}}\}\{((\nabla\gamma
_{0,0}^{1,3})_{d_{2}})^{-1}\Omega(\gamma_{d_{2}}^{2})_{-d_{1}d_{3}}%
(\nabla\gamma_{0,0}^{1,3})_{d_{2}}\}\\
&  \{((\nabla\gamma_{0,0}^{1,2})_{d_{3}})^{-1}\Omega(\gamma_{d_{3}}%
^{3})_{d_{1}d_{2}}(\nabla\gamma_{0,0}^{1,2})_{d_{3}}\}\Omega(\gamma_{0}%
^{1})_{-d_{2}d_{3}}\Omega(\gamma_{0}^{2})_{d_{1}d_{3}}\Omega(\gamma_{0}%
^{3})_{-d_{1}d_{2}}\\
&  \text{[By Proposition \ref{t1.2}, cf. Figures (\ref{Figure 4}%
)-(\ref{Figure 9})]}\\
&  =\{((\nabla\gamma_{0,0}^{1,3})_{d_{2}})^{-1}\Omega(\gamma_{d_{2}}%
^{2})_{-d_{1}d_{3}}(\nabla\gamma_{0,0}^{1,3})_{d_{2}}\}\{((\nabla\gamma
_{0,0}^{1,2})_{d_{3}})^{-1}\Omega(\gamma_{d_{3}}^{3})_{d_{1}d_{2}}%
(\nabla\gamma_{0,0}^{1,2})_{d_{3}}\}\\
&  \{((\nabla\gamma_{0,0}^{2,3})_{d_{1}})^{-1}\Omega(\gamma_{d_{1}}%
^{1})_{d_{2}d_{3}}(\nabla\gamma_{0,0}^{2,3})_{d_{1}}\}\Omega(\gamma_{0}%
^{2})_{d_{1}d_{3}}\Omega(\gamma_{0}^{1})_{-d_{2}d_{3}}\Omega(\gamma_{0}%
^{3})_{-d_{1}d_{2}}\\
&  \text{[By Proposition \ref{t1.2}, cf. Figures (\ref{Figure 10}%
)-(\ref{Figure 12})]}\\
&  =\{P_{AO}P_{DA}P_{GD}R_{GFBD}P_{DG}P_{AD}P_{OA}\}\{P_{AO}P_{DA}%
P_{GD}R_{GECF}P_{DG}P_{AD}P_{OA}\}\\
&  \{P_{AO}P_{DA}P_{GD}R_{GDAE}P_{DG}P_{AD}P_{OA}\}R_{OCEA}R_{OBFC}R_{OADB}\\
&  =\mathrm{id}_{O}%
\end{align*}%
\begin{align}
&
\begin{array}
[c]{ccc}%
O & \underrightarrow{((\nabla\gamma_{0,0}^{1,2})_{d_{3}})^{-1}\Omega
(\gamma_{d_{3}}^{3})_{d_{1}d_{2}}(\nabla\gamma_{0,0}^{1,2})_{d_{3}}} & O\\%
\begin{array}
[c]{cc}%
\Omega(\gamma_{0}^{1})_{-d_{2}d_{3}} & \downarrow
\end{array}
& \circlearrowleft &
\begin{array}
[c]{cc}%
\downarrow & \Omega(\gamma_{0}^{1})_{-d_{2}d_{3}}%
\end{array}
\\
O & \overrightarrow{((\nabla\gamma_{0,0}^{1,2})_{d_{3}})^{-1}\Omega
(\gamma_{d_{3}}^{3})_{d_{1}d_{2}}(\nabla\gamma_{0,0}^{1,2})_{d_{3}}} & O
\end{array}
\label{Figure 4}\\
&
\begin{array}
[c]{ccc}%
O & \underrightarrow{(\nabla\gamma_{0,0}^{1,3})_{d_{2}})^{-1}\Omega
(\gamma_{d_{2}}^{2})_{-d_{1}d_{3}}(\nabla\gamma_{0,0}^{1,3})_{d_{2}}} & O\\%
\begin{array}
[c]{cc}%
\Omega(\gamma_{0}^{1})_{-d_{2}d_{3}} & \downarrow
\end{array}
& \circlearrowleft &
\begin{array}
[c]{cc}%
\downarrow & \Omega(\gamma_{0}^{1})_{-d_{2}d_{3}}%
\end{array}
\\
O & \overrightarrow{(\nabla\gamma_{0,0}^{1,3})_{d_{2}})^{-1}\Omega
(\gamma_{d_{2}}^{2})_{-d_{1}d_{3}}(\nabla\gamma_{0,0}^{1,3})_{d_{2}}} & O
\end{array}
\label{Figure 5}\\
&
\begin{array}
[c]{ccc}%
O & \underrightarrow{((\nabla\gamma_{0,0}^{1,2})_{d_{3}})^{-1}\Omega
(\gamma_{d_{3}}^{3})_{d_{1}d_{2}}(\nabla\gamma_{0,0}^{1,2})_{d_{3}}} & O\\%
\begin{array}
[c]{cc}%
\Omega(\gamma_{0}^{1})_{-d_{2}d_{3}} & \downarrow
\end{array}
& \circlearrowleft &
\begin{array}
[c]{cc}%
\downarrow & \Omega(\gamma_{0}^{1})_{-d_{2}d_{3}}%
\end{array}
\\
O & \overrightarrow{((\nabla\gamma_{0,0}^{1,2})_{d_{3}})^{-1}\Omega
(\gamma_{d_{3}}^{3})_{d_{1}d_{2}}(\nabla\gamma_{0,0}^{1,2})_{d_{3}}} & O
\end{array}
\label{Figure 6}\\
&
\begin{array}
[c]{ccc}%
O & \underrightarrow{((\nabla\gamma_{0,0}^{1,3})_{d_{2}})^{-1}\Omega
(\gamma_{d_{2}}^{2})_{-d_{1}d_{3}}(\nabla\gamma_{0,0}^{1,3})_{d_{2}}} & O\\%
\begin{array}
[c]{cc}%
\Omega(\gamma_{0}^{2})_{d_{1}d_{3}} & \downarrow
\end{array}
& \circlearrowleft &
\begin{array}
[c]{cc}%
\downarrow & \Omega(\gamma_{0}^{2})_{d_{1}d_{3}}%
\end{array}
\\
O & \overrightarrow{((\nabla\gamma_{0,0}^{1,3})_{d_{2}})^{-1}\Omega
(\gamma_{d_{2}}^{2})_{-d_{1}d_{3}}(\nabla\gamma_{0,0}^{1,3})_{d_{2}}} & O
\end{array}
\label{Figure 7}\\
&
\begin{array}
[c]{ccc}%
O & \underrightarrow{((\nabla\gamma_{0,0}^{1,2})_{d_{3}})^{-1}\Omega
(\gamma_{d_{3}}^{3})_{d_{1}d_{2}}(\nabla\gamma_{0,0}^{1,2})_{d_{3}}} & O\\%
\begin{array}
[c]{cc}%
\Omega(\gamma_{0}^{2})_{d_{1}d_{3}} & \downarrow
\end{array}
& \circlearrowleft &
\begin{array}
[c]{cc}%
\downarrow & \Omega(\gamma_{0}^{2})_{d_{1}d_{3}}%
\end{array}
\\
O & \overrightarrow{((\nabla\gamma_{0,0}^{1,2})_{d_{3}})^{-1}\Omega
(\gamma_{d_{3}}^{3})_{d_{1}d_{2}}(\nabla\gamma_{0,0}^{1,2})_{d_{3}}} & O
\end{array}
\label{Figure 8}\\
&
\begin{array}
[c]{ccc}%
O & \underrightarrow{((\nabla\gamma_{0,0}^{1,2})_{d_{3}})^{-1}\Omega
(\gamma_{d_{3}}^{3})_{d_{1}d_{2}}(\nabla\gamma_{0,0}^{1,2})_{d_{3}}} & O\\%
\begin{array}
[c]{cc}%
\Omega(\gamma_{0}^{3})_{-d_{1}d_{2}} & \downarrow
\end{array}
& \circlearrowleft &
\begin{array}
[c]{cc}%
\downarrow & \Omega(\gamma_{0}^{3})_{-d_{1}d_{2}}%
\end{array}
\\
O & \overrightarrow{((\nabla\gamma_{0,0}^{1,2})_{d_{3}})^{-1}\Omega
(\gamma_{d_{3}}^{3})_{d_{1}d_{2}}(\nabla\gamma_{0,0}^{1,2})_{d_{3}}} & O
\end{array}
\label{Figure 9}%
\end{align}%
\begin{align}
&
\begin{array}
[c]{ccc}%
O & \underrightarrow{{\tiny ((\nabla\gamma}_{0,0}^{2,3}{\tiny )}_{d_{1}%
}{\tiny )}^{-1}{\tiny \Omega(\gamma}_{d_{1}}^{1}{\tiny )}_{d_{2}d_{3}%
}{\tiny (\nabla\gamma}_{0,0}^{2,3}{\tiny )}_{d_{1}}} & O\\%
\begin{array}
[c]{cc}%
{\tiny ((\nabla\gamma}_{0,0}^{1,3}{\tiny )}_{d_{2}}{\tiny )}^{-1}%
{\tiny \Omega(\gamma}_{d_{2}}^{2}{\tiny )}_{-d_{1}d_{3}}{\tiny (\nabla\gamma
}_{0,0}^{1,3}{\tiny )}_{d_{2}} & \downarrow
\end{array}
& \circlearrowleft &
\begin{array}
[c]{cc}%
\downarrow & {\tiny ((\nabla\gamma}_{0,0}^{1,3}{\tiny )}_{d_{2}}{\tiny )}%
^{-1}{\tiny \Omega(\gamma}_{d_{2}}^{2}{\tiny )}_{-d_{1}d_{3}}{\tiny (\nabla
\gamma}_{0,0}^{1,3}{\tiny )}_{d_{2}}%
\end{array}
\\
O & \overrightarrow{{\tiny ((\nabla\gamma}_{0,0}^{2,3}{\tiny )}_{d_{1}%
}{\tiny )}^{-1}{\tiny \Omega(\gamma}_{d_{1}}^{1}{\tiny )}_{d_{2}d_{3}%
}{\tiny (\nabla\gamma}_{0,0}^{2,3}{\tiny )}_{d_{1}}} & O
\end{array}
\label{Figure 10}\\
&
\begin{array}
[c]{ccc}%
O & \underrightarrow{{\tiny ((\nabla\gamma}_{0,0}^{2,3}{\tiny )}_{d_{1}%
}{\tiny )}^{-1}{\tiny \Omega(\gamma}_{d_{1}}^{1}{\tiny )}_{d_{2}d_{3}%
}{\tiny (\nabla\gamma}_{0,0}^{2,3}{\tiny )}_{d_{1}}} & O\\%
\begin{array}
[c]{cc}%
{\tiny ((\nabla\gamma}_{0,0}^{1,2}{\tiny )}_{d_{3}}{\tiny )}^{-1}%
{\tiny \Omega(\gamma}_{d_{3}}^{3}{\tiny )}_{d_{1}d_{2}}{\tiny (\nabla\gamma
}_{0,0}^{1,2}{\tiny )}_{d_{3}} & \downarrow
\end{array}
& \circlearrowleft &
\begin{array}
[c]{cc}%
\downarrow & {\tiny ((\nabla\gamma}_{0,0}^{1,2}{\tiny )}_{d_{3}}{\tiny )}%
^{-1}{\tiny \Omega(\gamma}_{d_{3}}^{3}{\tiny )}_{d_{1}d_{2}}{\tiny (\nabla
\gamma}_{0,0}^{1,2}{\tiny )}_{d_{3}}%
\end{array}
\\
O & \overrightarrow{{\tiny ((\nabla\gamma}_{0,0}^{2,3}{\tiny )}_{d_{1}%
}{\tiny )}^{-1}{\tiny \Omega(\gamma}_{d_{1}}^{1}{\tiny )}_{d_{2}d_{3}%
}{\tiny (\nabla\gamma}_{0,0}^{2,3}{\tiny )}_{d_{1}}} & O
\end{array}
\label{Fiure 11}%
\end{align}%
\begin{equation}%
\begin{array}
[c]{ccc}%
O & \underrightarrow{\Omega(\gamma_{0}^{1})_{-d_{2}d_{3}}} & O\\%
\begin{array}
[c]{cc}%
\Omega(\gamma_{0}^{2})_{d_{1}d_{3}} & \downarrow
\end{array}
& \circlearrowleft &
\begin{array}
[c]{cc}%
\downarrow & \Omega(\gamma_{0}^{2})_{d_{1}d_{3}}%
\end{array}
\\
O & \overrightarrow{\Omega(\gamma_{0}^{1})_{-d_{2}d_{3}}} & O
\end{array}
\label{Figure 12}%
\end{equation}
This completes the proof.
\end{proof}


\begin{thebibliography}{99}                                                                                               %


\bibitem {k1}Kock, A.: Synthetic Differential Geometry, London Mathematical
Society Lecture Note Series, \textbf{51}, Cambridge University Press,
Cambridge, 1981.

\bibitem {k2}Kock, A.:Differential forms with values in groups (preliminary
report), Cahiers Top. Geom. Diff., \textbf{22} (1981), 141-148.

\bibitem {k3}Kock, A.:Differential forms with values in groups, Bull. Austral.
Math. Soc., 25 (1982), 357-386.

\bibitem {k4}Kock, A.:Combinatorics of curvature, and the Bianchi identity,
Theory and Applications of Categories, \textbf{2} (1996), 69-89.

\bibitem {l1}Lavendhomme, R.: Basic Concepts of Synthetic Differential
Geometry, Kluwer, Dordrecht, 1996.

\bibitem {ma1}Mackenzie, K. C. H.:General Theory of Lie Groupoids and Lie
Algebroids, London Mathematical Society Lecture Note Series, \textbf{213},
Cambridge University Press, Cambridge, 2005.

\bibitem {mr1}Moerdijk, I. and Reyes, G. E.: Models for Smooth Infinitesimal
Analysis, Springer-Verlag, New York, 1991.

\bibitem {n1}Nishimura, H.:Another curvature in synthetic differential
geometry, Bull. Belg. Math. Soc. Simon Stevin, \textbf{7} (2000), 161-171.

\bibitem {n1.1}Nishimura, H.:The first Bianchi identity in synthetic
differential geometry, Journal of Pure and Applied Algebra, \textbf{160}
(2001), 263-274.

\bibitem {n2}Nishimura, H.:The Lie algebra of the group of bisections, Far
East Journal of Mathematical Sciences, \textbf{24} (2007), 329-342.

\bibitem {n3}Nishimura, H.:Another coboundary operator for differential forms
with values in the Lie algebra bundle of a group bundle, Mathematics ArXiv, math.DG/0612067
\end{thebibliography}
\end{document}